\documentclass[a4paper, 11pt]{article}
\usepackage[backend=biber,style=numeric,doi=false,isbn=false,url=false,eprint=true, sorting=nyt
]{biblatex}
\addbibresource{ref.bib}
\usepackage{setspace}
\usepackage{fullpage}
\usepackage{mathtools}
\usepackage[utf8]{inputenc}
\usepackage{amsthm,amsmath,amstext, amssymb, xcolor, scrextend, tikz, multirow, 
enumerate, wasysym, makecell, tabularx, hyperref,  pbox, lipsum,csquotes,  tikz-cd, 
multicol, calc, accents, stackengine, kpfonts}
\usepackage[all]{xy}
\usepackage{xcolor}
\usepackage{palatino}
\usepackage{cancel}
\usepackage{authblk}

\usepackage{mathtools}

\DeclareSymbolFont{AMSb}{U}{msb}{m}{n}
\DeclareMathSymbol{\N}{\mathbin}{AMSb}{"4E}
\DeclareMathSymbol{\Z}{\mathbin}{AMSb}{"5A}
\DeclareMathSymbol{\R}{\mathbin}{AMSb}{"52}
\DeclareMathSymbol{\Q}{\mathbin}{AMSb}{"51}
\DeclareMathSymbol{\I}{\mathbin}{AMSb}{"49}
\DeclareMathSymbol{\C}{\mathbin}{AMSb}{"43}

\DeclareFontFamily{U}{mathx}{\hyphenchar\font45}
\DeclareFontShape{U}{mathx}{m}{n}{<-> mathx10}{}
\DeclareSymbolFont{mathx}{U}{mathx}{m}{n}
\DeclareMathAccent{\widebar}{0}{mathx}{"73}

\def\lim{\mathop{\rm lim}\nolimits}

\def\Ext{\mathrm{Ext}}
\def\Hom{\mathrm{Hom}}

\def\ker{\mathop{\rm Ker}\nolimits}

\newcommand{\res}[1]{\hspace{-0.6mm}\downarrow_{\hspace{-0.25mm}{#1}}}
\newcommand{\dire}[1]{\displaystyle{\bigoplus_{{#1}}}}
\newcommand{\dbar}[2]{\displaystyle{\bigoplus_{{#1}}}\overline{#2}}
\newcommand{\dbn}[2]{\displaystyle{\bigoplus_{{#1}}}\overline{F({#2})_N}}

\newcommand{\F}{\mathbb{F}}

\DeclareMathOperator{\Ima}{Im}

\numberwithin{equation}{section}
\newtheorem{theorem}{Theorem}
\newtheorem{prop}[theorem]{Proposition}
\newtheorem{lemma}[theorem]{Lemma}

\newtheorem{remark}{Remark}
\theoremstyle{definition}
\newtheorem{defn}{Definition}
\newtheorem{example}{Example}

\usetikzlibrary{decorations.shapes}
\usetikzlibrary{decorations.text}
\usetikzlibrary{calc}

\hypersetup{colorlinks=true,linkcolor=blue,citecolor=blue, filecolor=blue,urlcolor=blue}
\title{A recognition theorem for permutation modules over  $p$-groups extending Weiss' Theorem}

\author[1]{Marlon Estanislau}
\affil{Universidade Federal de Minas Gerais, Belo Horizonte, MG, Brazil}
\begin{document}
\maketitle
\onehalfspacing
\footnotetext[1]{\textit{Email addresses:} mestanislau@ufmg.br}
\footnotetext[2]{Research is part of the author's PhD thesis at the Federal University of Minas Gerais, supervised by 
John William MacQuarrie.}
\begin{abstract}
    Let $G$ be a finite $p$-group with normal subgroup $N$, and $R$ a complete discrete valuation ring in mixed characteristic.  We characterize permutation $RG$-modules in terms of modules for $RN$ and $R[G/N]$.  The result generalizes both the seminal detection theorem for permutation modules due to Weiss, who characterizes those permutation $RG$-modules that are $RN$-free when $R$ is a finite extension of $\Z_p$, and a more recent result of MacQuarrie and Zalesskii, who prove a characterization of permutation modules when $N$ has order $p$ and $R = \Z_p$.
    
    \vspace{1.5mm}
    \noindent 2020 mathematics subject classification: 20c11
    
    \noindent \textbf{keywords}: finite $p$-groups, permutation modules, lattices, $p$-adic representations

   \end{abstract}
\section{Introduction}
    Let $p>0$ be a prime number. Throughout this article, $G$ is  a finite $p$-group and $N$ is a normal subgroup of $G$ (denoted $N\lhd G$). By $R$ we denote a complete discrete valuation ring in mixed characteristic, that is, with residue field $\F$ of characteristic $p$ and fraction field of characteristic $0$.
    
    Let $A$ denote a quotient ring of $R$.
    An $AG$-\emph{lattice} is an $AG$-module that is $A$-free and finitely  generated. We are particularly interested in $AG$-lattices that are \emph{$AG$-permutation modules}: an $AG$-lattice having an $A$-basis that is preserved set-wise by the multiplication
    of $G$.  Such lattices are extremely well-behaved and serve as a natural generalization of free modules.  However, given
    a lattice, it may not be clear whether it is a permutation module. In 1988, A.\ Weiss provided a detection theorem for permutation $RG$-modules when $R = \Z_p$ \cite[Theorem 2]{WeissAnnals}.  Weiss later generalized his own result, allowing $R$ to be a finite extension of $\Z_p$ (cf.\cite{Wei}), and more recently J. MacQuarrie, P. Symonds and P. Zalesskii generalized it further, allowing $R$ to be a complete discrete valuation ring in mixed characteristic:
\begin{theorem}(Special case of \cite[Theorem 1.2]{MACQUARRIE2020106925})\label{Theo.MSZ}
     Let $R$ be a complete discrete valuation ring in mixed characteristic with 
residue field of characteristic $p$, let $G$ be a finite $p$-group and let $U$ be an $RG$-lattice. Suppose there is a normal subgroup $N$ of $G$ such that:
\begin{enumerate}
    \item the module $U$ restricted to $N$ is  $RN$-free,
    \item the submodule of $N$-fixed points $U^N$ is an $R[G/N]$-permutation module.
   
\end{enumerate}
 Then $U$ itself is an $RG$-permutation module.
\end{theorem}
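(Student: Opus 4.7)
The plan is to proceed by induction on $|N|$ and reduce the theorem to the single case $|N|=p$. The base case $|N|=1$ is trivial, since then $U = U^N$ is by hypothesis an $R[G/N]=RG$-permutation module.

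For the inductive step, assume the theorem holds for any triple $(G',N',U')$ with $|N'| < |N|$. Since $N$ is a nontrivial normal subgroup of the $p$-group $G$, the intersection $N\cap Z(G)$ is nontrivial, so I may choose a central subgroup $Z\leq N$ of order $p$. The idea is to first apply the induction hypothesis to $(G/Z,\,N/Z,\,U^Z)$, deducing that $U^Z$ is an $R[G/Z]$-permutation module, and then to invoke the case $|N|=p$ of the theorem (with normal subgroup $Z$ in $G$) to conclude that $U$ itself is $RG$-permutation.

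To apply the induction hypothesis I would check its two hypotheses on $U^Z$. First, $U^Z\res{N/Z}$ is $R[N/Z]$-free: writing $U\res{N}\cong (RN)^m$ and using that $Z$ is central, the norm element $\sigma=\sum_{z\in Z}z$ identifies $(RN)^Z=\sigma\cdot RN$ with $R[N/Z]$ as an $R[N/Z]$-module, so $U^Z\res{N/Z}\cong R[N/Z]^m$. Second, $(U^Z)^{N/Z}=U^N$ is an $R[G/N]$-permutation module by hypothesis, using the canonical isomorphism $(G/Z)/(N/Z)\cong G/N$. Note also that $U^Z$ is an $R[G/Z]$-lattice (it is $R$-free, being a submodule of the $R$-free module $U$ over a DVR).

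The main obstacle is then the base case of the outer reduction, namely $|N|=p$: with $Z$ central of order $p$, $U\res{Z}$ free over $RZ$, and $U^Z$ an $R[G/Z]$-permutation module, one must show $U$ is $RG$-permutation. For $R$ a finite extension of $\Z_p$ this is essentially Weiss' theorem. For a general complete discrete valuation ring in mixed characteristic, a natural route is a descent argument: extend scalars to a large enough finite extension of $\Z_p$ where Weiss applies, and then transfer the permutation decomposition back to $R$ using faithful flatness together with Krull--Schmidt type uniqueness for lattices (restriction, taking $Z$-fixed points, and the decomposition of a permutation module as a sum of transitive ones all behave well under flat base change). Alternatively one may reduce modulo powers of the uniformizer, settle the Artinian residual case, and lift via completeness. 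Either way, the order-$p$ central case is the technical heart of the whole argument, and the induction above cleanly isolates it.
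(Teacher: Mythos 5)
Your inductive reduction to the central order-$p$ case is correct and well organized: the norm element $\sigma=\sum_{z\in Z}z$ does identify $(RN)^Z$ with $R[N/Z]$ (as left $R[N/Z]$-modules, using that $Z$ is central), so $U^Z\res{N/Z}\cong R[N/Z]^m$; the identity $(U^Z)^{N/Z}=U^N$ and the isomorphism $(G/Z)/(N/Z)\cong G/N$ give the second hypothesis; and $U^Z$ is $R$-free, hence an $R[G/Z]$-lattice. Note, though, that the paper does not prove Theorem~\ref{Theo.MSZ} at all --- it is imported verbatim as a special case of \cite[Theorem~1.2]{MACQUARRIE2020106925} --- so there is no internal argument to compare yours against.

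The genuine gap is precisely the base case you single out, and the two routes you sketch for it do not close. The descent route (``extend scalars to a large enough finite extension of $\Z_p$ where Weiss applies'') presupposes either a ring homomorphism from $R$ to some $S$ finite over $\Z_p$, or a complete discrete valuation subring $R_0\subseteq R$ finite over $\Z_p$ over which $U$ is already defined. Neither need exist: a general $R$ of mixed characteristic may have infinite residue field (for instance $W(\overline{\F}_p)$ or a Cohen ring for $\F_p(t)$), so any local homomorphism $R\to S$ would have to embed an infinite field into a finite one, and a nonlocal one would embed $\mathrm{Frac}(R)$ into a finite extension of $\Q_p$, which again is impossible in general; while the subring of $R$ generated over $\Z_p$ by the entries of the action matrices is typically not a complete DVR, so Weiss cannot be applied there either. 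The alternative of reducing modulo $\pi^n$ and lifting is likewise not routine: the Artinian analogue of Weiss' theorem is not an available easier statement, and the lifting step needs exactly the vanishing of the relevant $\Ext^1$ that one is in the middle of establishing. Extending Weiss' detection theorem from finite extensions of $\Z_p$ to arbitrary complete DVRs of mixed characteristic is the main content of \cite{MACQUARRIE2020106925}, and it does not fall out of the $\Z_p$ case by flat base change or completion. So your argument correctly reduces the theorem to its central order-$p$ core, but leaves that core --- which is the whole theorem --- unproved.
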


Both Weiss' original result and its generalizations have diverse applications, being applied in number theory, profinite group theory and block theory, among others (cf.\cite{BOLTJE202070}, \cite{Florian}, \cite{LIVESEY202194}, \cite{Puig},  \cite{WeissAnnals}, \cite{P}).

  Whenever $U$ is an $AG$-module,
    the \emph{restriction} of $U$ to $N$ is denoted by $U\downarrow_N$. Weiss' Theorem cannot be applied to lattices for which $U\res{N}$ is not free.  If $U$ is a permutation module, then of
    course $U\res{N}$ is necessarily a permutation module, but, in general,  a sensible analogous statement to  Weiss' Theorem demanding only that $U\res{N}$
    be a permutation module seems far from obvious (the naive generalization holds when $G$ is cyclic
     \cite[Theorem 1.4]{M.cyclic} and \cite[Proposition 6.12]{TORRECILLAS2013533}, but not otherwise). To state  our result we require some definitions. 
    Whenever $U$ is $AG$-module, by $U^N$ (resp.\ $U_N$) we denote the 
    $N$-\emph{invariants} (resp.\ $N$-\emph{coinvari\-ants}) of $U$ -- that is, the largest $AG$-submodule (resp.
    $AG$-quotient module) of $U$ on which $N$ acts trivially.
    As usual, we denote by $\text{H}^1(G,-)$ the first right derived functor of the fixed point functor $(-)^G$.      
   An $RG$-module $U$ is called $G$-\emph{coflasque} if $\text{H}^1(L,U)=0$ for every subgroup $L$ of $G$.

    \begin{defn} We call an $RG$-module $U$ an $RG$-\emph{permutation block module} if 
    $$U=\dire{i\in I}B_i,$$
    where $B_i$ is an $A_iG$-permutation module with 
    $A_i=R/p^iR$ and $I$  a finite subset of $\Z_{\geq0}$.
    \end{defn}
    With this notation and definition, our main result is as follows: 
   
   \begin{theorem}\label{main}
    Let $U$ be an $RG$-lattice and $N\lhd G$. Then $U$ is an $RG$-permutation module if, and only if, the following conditions are satisfied:
    \begin{enumerate}
        \item $U\res{N}$ is an $RN$-permutation module,
        \item\label{it2} $U^N$ is $G/N$-coflasque, and $U_N$ is an $R[G/N]$-permutation module,
        \item\label{it3} $(U/U^N)_N$ is an $R[G/N]$-permutation block module.
    \end{enumerate}
    \end{theorem}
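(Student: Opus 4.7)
The argument splits into necessity and sufficiency.  Necessity follows by reducing to a transitive summand $U=R[G/K]$ and performing a direct double-coset calculation: one checks that $R[G/K]\res{N}\cong\bigoplus_\alpha R[N/(N\cap g_\alpha Kg_\alpha^{-1})]$ (verifying (1)); that $R[G/K]^N$ and $R[G/K]_N$ are both isomorphic, via orbit-sum and orbit-representative bases respectively, to the $R[G/N]$-permutation module $R[G/(KN)]$, which is coflasque by Shapiro's lemma (verifying (2)); and that each $N$-orbit of size $p^k$ on $G/K$ contributes a summand $R/p^kR$ to $(R[G/K]/R[G/K]^N)_N$, with $G/N$ permuting these contributions into an $A_k[G/N]$-permutation module, hence a permutation block module (verifying (3)).

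Sufficiency is by induction on $|N|$.  The base $|N|=1$ is immediate from (1).  In the inductive step choose $Z\leq Z(G)\cap N$ of order $p$, which exists because $G$ is a finite $p$-group and $N\lhd G$, so that $Z\lhd G$.  The strategy is to combine the inductive hypothesis applied to the quotient pair $N/Z\lhd G/Z$ acting on $U^Z$ with a final application of Theorem~\ref{Theo.MSZ}.  First, one verifies that conditions (1)--(3) for $(G,N,U)$ imply the analogous conditions for $(G/Z, N/Z, U^Z)$: for (1), each summand $R[N/H]$ of $U\res{N}$ contributes the $R[N/Z]$-permutation summand $R[(N/Z)/(ZH/Z)]$ to $U^Z\res{N/Z}$; the coflasque half of (2) transfers directly because $(U^Z)^{N/Z}=U^N$; and the permutation block condition (3) for $(U^Z/U^N)_{N/Z}$ is extracted from the one on $(U/U^N)_N$ by taking $N$-coinvariants of the short exact sequence $0\to U^Z/U^N\to U/U^N\to U/U^Z\to 0$.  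The inductive hypothesis then yields that $U^Z$ is an $R[G/Z]$-permutation module, whence a final application of Theorem~\ref{Theo.MSZ} -- applied to a suitable $Z$-free quotient of $U$ controlled via (1) and (3) -- promotes this to a full $RG$-permutation structure on $U$.

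The main technical obstacle is verifying the permutation half of (2) for the pair $(G/Z, N/Z)$ on $U^Z$, i.e.\ showing that the coinvariants $(U^Z)_{N/Z}$ form an $R[G/N]$-permutation module.  These are not canonically isomorphic to $U_N$, and the natural comparison map fits into an exact sequence whose control requires the coflasque hypothesis: it forces the vanishing of $H^1$-obstructions in the Hochschild--Serre spectral sequence for the central extension $1\to Z\to G\to G/Z\to 1$, which is what allows the transfer of permutation structure from $U_N$ to $(U^Z)_{N/Z}$.  The bookkeeping needed to match block summands in (3) across the same reduction -- separating $Z$-stable $N$-orbits from those split by $Z$ -- and the lifting from $U^Z$ to $U$ in the final MSZ step, are the further points where the definition of permutation block module and the full force of all three hypotheses come into play.
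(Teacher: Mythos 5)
Your necessity argument (reduce to a transitive summand $R[G/K]$ and compute) is essentially correct and is a direction the paper does not spell out. The interest is in the sufficiency direction, and there you propose a genuinely different strategy from the paper: you induct on $|N|$, pick a central $Z\leq Z(G)\cap N$ of order $p$, apply the inductive hypothesis to $(G/Z,\,N/Z,\,U^Z)$, and then try to lift via Theorem~\ref{Theo.MSZ}. The paper instead inducts on $\mathrm{rank}(U)$, splitting into the cases $F(N)=0$ and $F(N)\neq 0$, and peels off explicit $RG$-direct summands of $U$ using Lemma~\ref{split-over-N} and the technical Propositions~\ref{block},~\ref{sumanofU_N},~\ref{block-module},~\ref{sumanofU_M}, applying Theorem~\ref{Theo.MSZ} only after arranging an $RZ$-free (in fact $R[N/M]$-free) summand.

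Your route has two genuine gaps that are not repaired by what you sketch.

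First, the transfer of the permutation half of condition~(2). You must show $(U^Z)_{N/Z}$ is an $R[G/N]$-permutation module, and you correctly identify this as the crux, but the appeal to vanishing $H^1$-obstructions in Hochschild--Serre does not close it. The natural comparison map $(U^Z)_{N/Z}\to U_N$ is an $R[G/N]$-homomorphism, but on a summand $R[N/H]$ of $U\res{N}$ with $Z\cap H=1$ it is multiplication by $|Z|=p$ (one has $R[N/H]^Z=\widehat{Z}\,R[N/H]$, whose image in $R[N/H]_N\simeq R$ is $p\cdot R$), so it is never an isomorphism once some $F(H)$ with $H\cap Z=1$ appears. Thus the permutation structure of $U_N$ does not carry over by a norm or inflation--restriction argument, and no spectral-sequence vanishing will make this particular map surjective. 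The same difficulty infects your transfer of condition~(3): from the right-exact sequence $(U^Z/U^N)_{N}\to (U/U^N)_N\to (U/U^Z)_N\to 0$ you can only conclude that $(U^Z/U^N)_{N/Z}$ maps onto a certain submodule of a permutation block module; being a permutation block module is not inherited by submodules, so you have not established (3) for the smaller triple.

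Second, the final lift from $U^Z$ to $U$. Theorem~\ref{Theo.MSZ} requires the restriction to the chosen normal subgroup to be free, but $U\res{Z}$ is only a permutation $RZ$-module. It is $RZ$-free precisely when no $F(H)$ with $Z\leq H$ occurs, and that cannot be assumed. Passing to ``a suitable $Z$-free quotient of $U$'' would require first producing an $RG$-direct-sum decomposition of $U$ separating the $Z$-free part from the $Z$-trivial part; but producing such a decomposition from the module-theoretic hypotheses is exactly the hard content of the paper's Lemma~\ref{split-over-N} together with Propositions~\ref{block}--\ref{sumanofU_M}, none of which your proposal reproduces. In short, the structure of your induction relocates the difficulty (from rank-reduction to quotient-reduction) without supplying the lemmas needed to make either transfer step or the final MSZ application go through.
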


This result generalizes \cite[Theorem 2]{John}, who give an equivalent characterization in the special case when $R = \Z_p$ and $|N| = p$.
  The proof of Theorem \ref{main} uses a central idea from the proof of \cite[Theorem 2]{John}, but in general it is more complicated, requiring a more delicate analysis of the structure of $U\res{N}$ and techniques from  homological algebra. 
   The conditions in  Theorem \ref{main} are minimal, in the sense that if we do not suppose that $U\res{N}$ is a permutation module (Example \ref{exe}), or  that $U^N$ is $G/N$-coflasque \cite[Example 2]{John}, or  that $U_N$ is a permutation module \cite[Example 1]{John}, or  that $(U/U^N)_N$ is a permutation block module \cite[Examples 1 and 2]{J.M}, then $U$ may not be a permutation module.

\subsubsection*{Acknowledgements}
    The author thanks John William MacQuarrie for helpful conversations and for his attention and assistance in preparing this manuscript. The author is also grateful to Csaba Schneider for his help with computational methods used to build Example \ref{exe}. 
    The author was supported by  CAPES Doctoral Grant 88887.688170/2022-00 and by FAPEMIG Doctoral Grant 13632/2025-00.

 \section{Preliminaries}\label{s2} 
      Let $\pi$ be a prime element of $R$ and  denote by  $\F$  the residue field $R/\pi R$. Whenever  $U$ is an $RG$-module, we denote by $\overline{U}$ the $\F G$-module  $U/\pi U$. We will write at times 
   $\overline{X}$ when $X$ is a submodule of $U$.  Since $X$ will always  be an $R$-direct summand of  $U$, the two possible interpretations coincide: $(X+\pi U)/\pi U\simeq X/\pi X$. 
      \begin{lemma}\label{Ext=0}
    Let $U$ be a finitely generated $RG$-lattice and suppose that $\Ext_{RG}^1(U,U)=0$. If there is a decomposition of $\overline{U}=X'\oplus Y'$ as an $\F G$-module, then there exists a decomposition $U=X\oplus Y$ as an $RG$-module with  $\overline{X}=X'$ and $\overline{Y}=Y'$.
\end{lemma}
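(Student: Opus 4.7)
My plan is to translate the problem into lifting an idempotent. The $\F G$-decomposition $\overline{U} = X' \oplus Y'$ corresponds to an idempotent $\bar e \in \End_{\F G}(\overline U)$ with $\bar e \overline U = X'$ and $(1-\bar e)\overline U = Y'$. If I can produce an idempotent $e \in \End_{RG}(U)$ that reduces to $\bar e$, then $U = eU \oplus (1-e)U$ is an $RG$-decomposition, and because both summands are $R$-direct summands of $U$, tensoring with $\F$ stays exact, so reducing mod $\pi$ will recover $\overline U = X' \oplus Y'$. The whole argument thus rests on lifting $\bar e$ through a suitable surjection $\End_{RG}(U) \twoheadrightarrow \End_{\F G}(\overline U)$.

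To establish that surjection, I would apply $\Hom_{RG}(U,-)$ to the short exact sequence
\[ 0 \to U \xrightarrow{\pi} U \xrightarrow{q} \overline U \to 0, \]
obtaining the long exact sequence
\[ 0 \to \End_{RG}(U) \xrightarrow{\pi} \End_{RG}(U) \xrightarrow{q_*} \Hom_{RG}(U,\overline U) \to \Ext^1_{RG}(U,U). \]
The hypothesis $\Ext^1_{RG}(U,U)=0$ makes $q_*$ surjective with kernel $\pi\End_{RG}(U)$. Moreover, since $\pi$ annihilates $\overline U$, every $RG$-homomorphism $U \to \overline U$ factors uniquely through the quotient $q$, so $\Hom_{RG}(U,\overline U) \cong \End_{\F G}(\overline U)$ naturally. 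Composing, I get the desired isomorphism
\[ \End_{RG}(U)/\pi \End_{RG}(U) \;\cong\; \End_{\F G}(\overline U). \]

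Now I would lift $\bar e$. The ring $E := \End_{RG}(U)$ is a finitely generated $R$-module (because $U$ is finitely generated over the Noetherian ring $R$), hence $\pi$-adically complete since $R$ is complete. Because $\pi \in \Rad R$ and $E$ is module-finite over $R$, the ideal $\pi E$ lies in $\Rad E$. Under these conditions, the standard idempotent lifting theorem for complete rings (Hensel-type) applies: any idempotent in $E/\pi E$ lifts to an idempotent of $E$. Lifting $\bar e$ yields the required $e \in E$ with $e^2 = e$ and $\bar e$ as its reduction.

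Setting $X = eU$ and $Y = (1-e)U$, I get $U = X \oplus Y$ as $RG$-modules. As observed above, $X$ and $Y$ are $R$-pure in $U$, so reducing mod $\pi$ gives $\overline U = \overline X \oplus \overline Y$, and $\overline X$ is the image of $\bar e$ acting on $\overline U$, which is $X'$; likewise $\overline Y = Y'$. The only real subtlety in the argument is the identification establishing the surjection $E \twoheadrightarrow \End_{\F G}(\overline U)$; once the $\Ext^1$-vanishing and the $\Hom$-reduction identity are combined, the remainder is routine Hensel-style lifting.
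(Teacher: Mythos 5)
Your proof is correct and takes essentially the same approach as the paper: use the $\Ext^1$-vanishing to identify $\End_{RG}(U)/\pi\End_{RG}(U)$ with $\End_{\F G}(\overline{U})$, then lift the idempotent corresponding to $X'$ over the $\pi$-adically complete ring $\End_{RG}(U)$. The paper compresses the completeness/Hensel step into a citation (Benson, Theorem 1.9.4), whereas you spell it out, but the argument is the same.
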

\begin{proof}
    The exact sequence
    $$\xymatrix{0\ar[r]&U\ar[r]^{\pi}&U\ar[r]&\overline{U}\ar[r]&0},$$
    induces the exact sequence 
    $$\xymatrix{0\ar[r]& \Hom(U,U)\ar[r]^{\pi}&\Hom(U,U)\ar[r]&\Hom(U,\overline{U})\ar[r]&\Ext_{RG}^1(U,U)}=0,$$
    so $\overline{\text{End(U)}}\simeq \text{End}(\overline{U})$. Let $\alpha'$ and $\beta'$ be the projections on $X'$ and $Y'$ respectively with $\alpha'+ \beta'=id_{\overline{U}}$.  Now, $\text{End}(U)$ is a finitely generated algebra over a complete discrete valuation ring, so we can find idempotents $\alpha$ and $\beta$ in $\text{End}(U)$ with $\alpha + \beta=id_U$  satisfying  $\overline{\alpha}=\alpha'$ and $\overline{\beta}=\beta'$ \cite[Theorem 1.9.4]{benson}.  Accordingly, $U$ is a direct sum of
 $RG$-modules $X \oplus Y$ with $\overline{X} = X',\overline{Y} = Y'$. 
\end{proof}

\begin{lemma}(\cite[Corollary 6.8]{MACQUARRIE2020106925}\label{permuExt=0}
    Let $G$ be a finite $p$-group. If $A$ and $B$ are $RG$-permutation modules then $\Ext_{RG}^1(A,B)=0$. 
\end{lemma}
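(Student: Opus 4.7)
The plan is to reduce to the case of transitive permutation modules and then apply Shapiro's lemma together with the torsion-freeness of $R$.

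First, since $\Ext^1_{RG}(-,-)$ is additive in both arguments, I may assume without loss of generality that $A = R[G/H]$ and $B = R[G/K]$ for subgroups $H, K \le G$. The key reduction is the Eckmann–Shapiro-type isomorphism
\[
\Ext^1_{RG}(R[G/H], B) \cong \Ext^1_{RH}(R, B\res{H}) = \HH^1(H, B\res{H}),
\]
valid because $R[G/H] = \Ind_H^G R$ and induction from a subgroup of finite index is both left and right adjoint to restriction (so $\Ext$ transfers cleanly). Thus it suffices to show $\HH^1(H, B\res{H}) = 0$ when $B$ is an $RG$-permutation module.

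Second, I invoke Mackey's formula: the restriction of the permutation module $R[G/K]$ to $H$ decomposes as a direct sum of modules of the form $R[H/L]$, where the subgroups $L$ run over $H$-conjugates of intersections $H \cap gKg^{-1}$. Hence it is enough to show $\HH^1(H, R[H/L]) = 0$ for every subgroup $L \le H$. By the usual Shapiro's lemma for cohomology of finite groups,
\[
\HH^1(H, R[H/L]) = \HH^1(H, \Ind_L^H R) \cong \HH^1(L, R),
\]
where $R$ on the right carries the trivial $L$-action.

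Finally, because $L$ acts trivially, $\HH^1(L, R) = \Hom(L^{\mathrm{ab}}, R)$, the group of ordinary abelian-group homomorphisms from the finite $p$-group $L^{\mathrm{ab}}$ to the additive group of $R$. The image of any such homomorphism is a finite subgroup of $R$, but $R$ is a complete discrete valuation ring of mixed characteristic and in particular is torsion-free as an abelian group, so this image must be zero. Thus $\HH^1(L, R) = 0$, and threading back through the reductions yields $\Ext^1_{RG}(A,B) = 0$.

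The only subtle point is the first reduction: one must make sure that the Eckmann–Shapiro isomorphism applies over the (possibly non-semisimple) coefficient ring $R$. This is standard because $R[G/H] = RG \otimes_{RH} R$ with $RG$ free of finite rank as an $RH$-module, so induction is exact and preserves projectives, giving the desired isomorphism of Ext groups. Everything else is a routine application of Mackey and Shapiro, so I do not expect any serious obstacle beyond being careful with the module structures.
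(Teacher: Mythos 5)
The paper does not supply its own proof of this lemma; it cites it directly as \cite[Corollary 6.8]{MACQUARRIE2020106925}. Your argument is, however, correct and is the classical proof of this fact: reduce by additivity to transitive permutation modules, use Eckmann--Shapiro to pass from $\Ext^1_{RG}(R[G/H], B)$ to $\HH^1(H, B\res{H})$, decompose $B\res{H}$ via Mackey into modules $R[H/L]$, apply Shapiro once more to get $\HH^1(L, R)$, and finish by observing that a finite $p$-group has no nontrivial homomorphisms into the torsion-free additive group of $R$. This is exactly where the mixed-characteristic hypothesis on $R$ enters, and you used it correctly. Two small remarks: in your first reduction only the \emph{left}-adjoint property of induction (together with exactness and preservation of projectives) is needed, although the remark that induction is also right adjoint for finite index is what justifies identifying $\Ind_L^H R$ with $\Coind_L^H R$ in the second Shapiro step; and the equality $\Ext^1_{RH}(R,-) = \HH^1(H,-)$ for $RH$-modules is legitimate because $R$ is $\Z$-flat, so a $\Z H$-free resolution of $\Z$ base-changes to an $RH$-free resolution of $R$. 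In the cited source the result is obtained through a somewhat different framework (coflasque/supersurjective modules adapted to the pseudocompact setting), but for finite $p$-groups your direct Mackey--Shapiro route is the standard and more elementary one, and it proves precisely what is needed here.
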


   Let $N$ be a normal subgroup of $G$. Remember that the $N$-invariants of the $RG$-module $U$, the module $U^N$, is the largest submodule of $U$ such that $N$ acts trivially and  the $N$-inveriants of $U$, the module $U_N$, is the largest quotient module of $U$ such that $N$ acts trivially. Denote by $I_N$ the kernel of the augmentation map $RN\longrightarrow R$.  Explicitly, we have:
   $$U^N:=\{u \in U: nu=u \;\forall\; n\in N\},$$
   $$U_N:=U/I_NU.$$
 
      Let $S$ be the set of all subgroups of $N$ and let $\Gamma$ denote a set of representatives of the distinct  orbits of the action by conjugation of $N$ on $S$. From now on, we suppose that   $U$ is an $RG$-lattice such that $U\res{N}=\bigoplus_{H\in\Gamma}F(H)$ where $F(H)$ is an $RN$-permutation module with all its indecomposable summands isomorphic to $R[N/H]$. In what follows, we always suppose that $H$ is a subgroup of $N$ and $N$ is a normal subgroup of $G$. Observe that in this decomposition, the $RN$-direct summand $F(N)$ is an $RN$-lattice on which $N$ acts trivially. We say that $F(N)$ is a maximal trivial summand of $U\res{N}$. Furthermore, we will also  write $F(N)$ for the module $(F(N)+I_NU)/I_NU$, because, in this case $$U_N=\dire{H\in\Gamma\setminus \{N\}}F(H)_N\oplus F(N)_N\simeq\dire{H\in \Gamma\setminus \{N\}}F(H)_N\oplus F(N).$$
      
        Throughout the text, by $\varphi$ we will denote the natural projection $U\longrightarrow U_N$ and by  $\rho: U_N\longrightarrow (U/U^N)_N$ the natural projection induced by the natural projection $U\longrightarrow U/U^N$.
        
     \begin{prop}\label{tor}
        Let $V$ be the $RN$-lattice  $ R[N/H]$, and let $M$ be a normal subgroup of $N$. Then we have  the following isomorphisms of $RN$-modules:
        \begin{enumerate}
        \item $(V/V^N)_N\simeq R/p^kR$ where $p^k=[N:H]$.
        \item  $V_M\simeq R[N/MH]\simeq V^M$.
        \item $(V/V^M)_M\simeq (R/p^kR)[N/MH]$  where $p^k=[M:M\cap H ]$.
        \end{enumerate}
    \end{prop}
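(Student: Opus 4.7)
The plan is to compute each quotient explicitly by identifying the relevant sub- and quotient modules and then reducing to elementary orbit-counting arguments. Throughout I would use that for any permutation $RM$-module on a single orbit $R[N/H]$, both the $M$-invariants and $M$-coinvariants are free $R$-modules with bases naturally indexed by the $M$-orbits on $N/H$.

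For (1), I would observe that $V^N$ is the free rank-one $R$-module generated by the trace element $t=\sum_{xH\in N/H}xH$, while $V_N\cong R$ via the map sending each $xH$ to $1$. Applying the right exact functor $(-)_N$ to $0\to V^N\to V\to V/V^N\to 0$ gives the exact sequence $V^N\to V_N\to (V/V^N)_N\to 0$, and since $N$ acts trivially on $V^N$, the first map is the restriction of the natural projection $V\to V_N$. It sends $t$ to $[N:H]=p^k$ times a generator of $V_N$, so the cokernel is $R/p^kR$.

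For (2), the $M$-orbits on the coset space $N/H$ are the double cosets in $M\backslash N/H$, and since $M\lhd N$ we have $MxH=xMH$, so these orbits biject with $N/MH$. Passing to $M$-coinvariants of the permutation module $V$ collapses each orbit to a single basis element, giving $V_M\cong R[N/MH]$ as an $RN$-module, with $N$ acting by left multiplication on $N/MH$. For $V^M$, an $R$-basis is given by the orbit sums $t_O=\sum_{xH\in O}xH$ indexed by the $M$-orbits $O$; the identity $n\cdot t_O=t_{nO}$ then shows that $V^M\cong R[N/MH]$ as $RN$-modules as well.

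For (3), the computation of (1) has to be iterated orbit by orbit. Using (2) to identify both $V^M$ and $V_M$ with $R[N/MH]$, I need the map $V^M\to V_M$ induced by the inclusion followed by the projection. The stabilizer in $M$ of a coset $xH$ is $M\cap xHx^{-1}$, which by normality of $M$ is conjugate via $x$ to $M\cap H$; hence every $M$-orbit on $N/H$ has the uniform size $[M:M\cap H]=p^k$. Therefore each orbit sum $t_O$ maps to $p^k$ times the corresponding basis element of $V_M$, so the image of $V^M$ in $V_M$ is $p^kR[N/MH]$, and right exactness of $(-)_M$ applied to $0\to V^M\to V\to V/V^M\to 0$ yields $(V/V^M)_M\cong (R/p^kR)[N/MH]$. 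The only genuine subtlety is the uniformity of orbit sizes, which relies crucially on $M$ being normal in $N$; everything else amounts to direct bookkeeping with natural permutation bases.
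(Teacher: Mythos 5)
Your proposal is correct and follows essentially the same route as the paper: all three parts are computed directly with the natural permutation bases and the exactness of $(-)_N$ or $(-)_M$. The only stylistic difference is in part (2), where the paper invokes the Mackey decomposition formula and constructs the norm map $\mathcal{N}_{M/M\cap H}:V_M\to V^M$ (proving it is an isomorphism by $R$-freeness), whereas you exhibit explicit $R$-bases for both $V_M$ and $V^M$ indexed by the $M$-orbits on $N/H$ (equivalently $N/MH$, using $M\lhd N$) and verify the $N$-action directly. This slightly streamlines (2) by avoiding the injectivity check for the norm map, but the underlying combinatorics — identifying $M$-orbits with $N/MH$ and using that all orbits have uniform size $[M:M\cap H]$ because $M$ is normal — is identical. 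Your observation that the normality of $M$ is precisely what forces the uniform orbit size is the same key point the paper uses implicitly when it writes $V^M=\bigl(\sum_{m\in M/M\cap H}m\bigr)V$.
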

    \begin{proof}
        \begin{enumerate}
         \item   Since there exist $u\in V$ such that $$V^N=\left\langle\sum_{g\in N/H}gu\right\rangle_{R} \;\text{and}\; V_N=\langle u +I_NV\rangle_{R},$$ we have $$\varphi(V^N)=\left\langle p^k(u+I_NV)\right\rangle_R=p^kV_N,$$ where $p^k=[N:H]$. Thus, we obtain   $$R/p^kR=V_N/\varphi(V^N)=V/(I_NV+V^N)=(V/V^N)_N.$$
         \item Consider the surjective map $\beta: R[N/H]\to R[N/MH]$ given by $\beta(nH)=nMH$ for $n\in N$. We have $\ker \beta= I_MR[N/H]$, so $V_M\simeq R[N/MH]$. Now, by the Mackey decomposition formula \cite[Theorem 3.3.4]{benson},  we have $$V\res{M}\simeq \dire{nMH}\langle nH\rangle_{RM},$$ with $\langle nH\rangle_{RM}\simeq R[M/M\cap H]$. It follows that $$(\langle nH\rangle_{RM})^M=\left(\sum_{m\in M/M\cap H}m\right)\langle nH\rangle_{RM},$$ thus $V^M=\left(\sum_{m\in M/M\cap H}m\right)V$. On the other hand,  if $u$ and $u'$ are elements of $V$ such that $u-u'\in I_MV$, then $$\sum_{m\in M/M\cap H}m(u-u') \in V^M\cap I_MV=0.$$ So we have  an $R[N/M]$-homomorphism defined by $\mathcal{N}_{M/M\cap H}:V_M\to V^M $ given by $$\mathcal{N}_{M/M\cap H}(u+I_MV)=\sum_{m\in M/M\cap H}mu.$$ This map is surjective and since $V_M$ is $R$-free, is  also injective, thus $V_M\simeq V^M$.
         \item Let $W=(V^M+ I_MV)/I_MV$. By the  discussion in part 2,  $W=p^kV_M$ where $p^k=[M:M\cap H]$.  By part $2$, we also have $V_M\simeq R[N/MH]$.  Consequently,  $$(R/p^kR)[N/MH]\simeq V_M/W\simeq (V/V^M)_M.$$ 
         \end{enumerate}
    \end{proof}
      
    \begin{lemma}\label{F(N) is invari}
    
         Let $U$ be an $RG$-lattice with $U\res{N}=\dire{H\in \Gamma}F(H)$.
         Then
         \begin{enumerate}
         \item The $\F N$-submodule  $\overline{F(N)}$ of $\overline{U_N}$ does not depend on the choice of decomposition and is $G$-invariant.
         \item The $\F G$-modules $\overline{U}_N/\overline{F(N)}$ and $\overline{(U/U^N)_N}$ are naturally isomorphic.
        \end{enumerate}
    \end{lemma}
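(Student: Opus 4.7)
The plan is to identify $\overline{F(N)}$ intrinsically inside $\overline{U_N}$ by showing
\[
\overline{F(N)} \;=\; \overline{\varphi(U^N)},
\]
a subspace defined without reference to any decomposition. Both parts of the lemma will follow easily from this identification.

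To prove the identification, I would work summand by summand. The decomposition $U\res{N}=\bigoplus_{H\in\Gamma}F(H)$ gives $U^N=\bigoplus_H F(H)^N$ and $U_N=\bigoplus_H F(H)_N$, with $\varphi$ respecting the decomposition. The computation in the proof of Proposition~\ref{tor}(1), applied additively to each indecomposable summand $R[N/H]$ of $F(H)$, shows that $\varphi(F(H)^N) = p^{k_H}F(H)_N$ with $p^{k_H}=[N:H]$. For $H=N$ we have $k_H=0$ and $F(N)^N=F(N)$ maps onto $F(N)\subseteq U_N$; for $H\neq N$ we have $k_H\geq 1$, hence $\varphi(F(H)^N)\subseteq pU_N\subseteq \pi U_N$, using that $R$ has mixed characteristic so $p\in \pi R$. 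Reducing modulo $\pi$, only the contribution from $H=N$ survives, giving $\overline{\varphi(U^N)}=\overline{F(N)}$ in $\overline{U_N}$.

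Part~(1) is then immediate: the submodule $\overline{\varphi(U^N)}$ is manifestly independent of the decomposition, and since $U^N$ is $G$-stable ($N$ being normal in $G$), so is its image in $\overline{U_N}$. For part~(2), I would use that $(U/U^N)_N=U/(U^N+I_NU)=U_N/\varphi(U^N)$, so the projection $\rho\colon U_N\to (U/U^N)_N$ is surjective with kernel $\varphi(U^N)$. Reducing mod $\pi$ (right-exact) yields a natural $\F G$-surjection $\overline{U_N}\twoheadrightarrow \overline{(U/U^N)_N}$ whose kernel is the image of $\varphi(U^N)$ in $\overline{U_N}$, that is, $\overline{F(N)}$ by part~(1). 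This gives the desired natural isomorphism $\overline{U_N}/\overline{F(N)}\simeq \overline{(U/U^N)_N}$.

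The only delicate point is keeping the two meanings of $\overline{F(N)}$ straight (submodule of $\overline{U}$ versus submodule of $\overline{U_N}$) and checking that the computation of $\varphi(V^N)$ for $V=R[N/H]$ in Proposition~\ref{tor}(1) extends additively to all of $F(H)$; both are bookkeeping, and I do not anticipate any genuine obstacle.
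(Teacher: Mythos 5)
Your identification $\overline{F(N)}=\overline{\varphi(U^N)}$ is exactly the paper's key observation, and your proof of part (1) is essentially the same as the paper's: decompose $U^N=\bigoplus_H F(H)^N$, note via the computation of Proposition~\ref{tor}(1) that $\varphi(F(H)^N)\subseteq p^{k_H}U_N\subseteq \pi U_N$ for $H<N$ since $p\in\pi R$, and conclude that only $F(N)$ survives reduction modulo $\pi$.

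For part (2) your route is a genuinely different, and arguably cleaner, argument. The paper shows $\overline{F(N)}\leq\ker\overline{\rho}$ and then forces equality by an $\F$-dimension count: it identifies $\dim_\F\overline{(U/U^N)_N}$ and $\dim_\F(\overline{U_N}/\overline{F(N)})$ both with the number $l$ of indecomposable summands of $F=\bigoplus_{H<N}F(H)$, using Proposition~\ref{tor} again. You instead observe that $\rho$ is the surjection $U_N\twoheadrightarrow U_N/\varphi(U^N)$, and that for any surjection of $R$-modules $f\colon A\to B$ with kernel $K$, the kernel of $\overline{f}\colon\overline{A}\to\overline{B}$ is $(K+\pi A)/\pi A$, i.e. the image of $K$ in $\overline{A}$; applying this with $K=\varphi(U^N)$ and invoking part (1) gives $\ker\overline{\rho}=\overline{F(N)}$ directly, with no dimension bookkeeping. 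Your version also makes the naturality of the isomorphism transparent, since the isomorphism is visibly the one induced by $\overline{\rho}$, whereas the paper's dimension argument proves the two spaces are abstractly isomorphic and then relies on $\overline{\rho}$ already being in hand. Both proofs are correct; yours is a modest streamlining.
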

         \begin{proof}
         We will proceed  as in \cite[Lemma 6]{John}.
             \begin{enumerate}
                 \item We have  $U^N=\dire{H\in \Gamma\setminus \{N\}}F(H)^N\oplus F(N),$
                 and $\varphi(F(H)^N)\leqslant \pi U_N$ if $H<N$, because $F(H)^N$ is generated by elements of the form $\sum_{g\in N/H}gu$ with $u\in F(H)$. So if $$\overline{\varphi}^N:U^N\to \overline{U_N}$$ is the obvious $RG$-homomorphism induced by $\varphi,$ then
                  $\overline{\varphi}^N(F(H)^N)=0$ for each $H<N$. Therefore, $\overline{\varphi}^N(U^N)=\overline{F(N)}$ is an $\F  G$-submodule of $\overline{U}_N$, and clearly it does not depend on the decomposition of $U\res{N}$.
                 \item We have a surjective map $\overline{\rho}:\overline{U_N}\longrightarrow \overline{(U/U^N)_N}$ induced by $\rho$, with $\overline{F(N)}\leqslant \ker \overline{\rho}$ since $F(N)\leqslant U^N$. Let $F:=\dire{\substack{H\in \Gamma\setminus \{N\}}}F(H)$. Then
                 $U/U^N=F/F^N$ and by Proposition \ref{tor}, the $R$-module $(F(H)/F(H)^N)_N$  is a direct sum of $R$-modules of the form $R/p^kR$  where $p^k=|N/H|$. It follows that $\overline{(U/U^N)_N}=\overline{(F/F^N)_N}$ is an $\F$-vector space with dimension equal to the  number $l$ of indecomposable   $RN$-direct summands of $F$. But $\overline{U_N}=\overline{F(N)}\oplus\overline{F_N}$ and $\dim(\overline{F_N})=l$ because $R[N/H]_N\simeq R$ as $RN$-modules, hence  $\overline{U_N}/\overline{F(N)}\simeq \overline{(U/U^N)_N}$.
             \end{enumerate}
        \end{proof}

    \begin{lemma}\label{t9}
         Let $N$ be a normal subgroup of $G$ and let $U$ be  an $RN$-lattice  with    maximal trivial   summand $T$. Suppose  that $U_N$ is an $R$-lattice  and $U_N=X\oplus Y$ is a decomposition into $RG$-modules  with $\overline{X}=\overline{T}$ in $\overline{U_N}$. Denote by $\theta_X$ the projection of $U_N$ on $X$ relative to the decomposition $U_N=X\oplus Y$. Then  the map $$\gamma:\xymatrix{U\ar[r]^{\varphi}& U_N\ar[r]^{\theta_X}& X },$$ is $RN$-split and $U=\ker\gamma\oplus T$.
    \end{lemma}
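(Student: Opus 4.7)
The plan is to show that the restriction $\gamma|_T:T\to X$ is already an $RN$-isomorphism; from this the splitting of $\gamma$ and the decomposition $U=T\oplus\ker\gamma$ follow immediately by taking $\sigma:=(\gamma|_T)^{-1}:X\to T\subseteq U$ as the section.

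First I would verify that $\varphi|_T$ is injective. Decomposing $U=T\oplus U'$ as $RN$-modules with $T$ the maximal trivial summand, the fact that $N$ acts trivially on $T$ gives $I_N U=I_N U'$, so $T\cap I_N U=0$. Consequently $\varphi$ identifies $T$ with a sublattice $\varphi(T)\subseteq U_N$, and reducing modulo $\pi$ yields $\overline{\varphi(T)}=\overline{T}$ inside $\overline{U_N}$. By hypothesis this subspace equals $\overline{X}$, so $\overline{\theta_X}$ acts as the identity on $\overline{\varphi(T)}$; composing, the reduction $\overline{\gamma|_T}:\overline{T}\to\overline{X}$ is an isomorphism of $\F$-vector spaces.

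Both $T$ and $X$ are $R$-lattices (the latter because $U_N$ is one and $X$ is an $R$-summand) of equal rank, since their mod-$\pi$ reductions are isomorphic. Nakayama's Lemma then promotes the mod-$\pi$ isomorphism to surjectivity of $\gamma|_T$ over $R$, and a surjection between finitely generated free $R$-modules of the same rank is automatically injective, so $\gamma|_T$ is an $R$-isomorphism. Since $N$ acts trivially on both $T$ and $X$, this $R$-isomorphism is in fact $RN$-linear; the section $\sigma$ therefore exhibits $\gamma$ as $RN$-split, and $U=T\oplus\ker\gamma$ follows by the usual argument: the intersection vanishes because $\gamma|_T$ is injective, and any $u\in U$ decomposes as $\sigma(\gamma(u))+(u-\sigma(\gamma(u)))$ with the two terms lying in $T$ and $\ker\gamma$ respectively. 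The delicate step is the identification $\overline{\varphi(T)}=\overline{T}=\overline{X}$ inside $\overline{U_N}$, which is what lets us lift from $\F$ to $R$; once that is in place the remainder is a routine Nakayama-and-rank argument.
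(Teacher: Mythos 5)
Your proof is correct and takes essentially the same route as the paper: both exploit that $T$ and $X$ are $R$-direct summands of the lattice $U_N$ with the same reduction modulo $\pi$, so that $\theta_X$ restricted to (the image of) $T$ is an $R$-isomorphism whose inverse furnishes the required $RN$-section. The paper packages this by first passing to the decomposition $U_N=T\oplus Y$ and writing down $\alpha\colon X\to T$ explicitly, while you argue directly that $\gamma|_T$ is an isomorphism via Nakayama and a rank count; the two sections coincide.
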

        \begin{proof}
            As $\overline{X}=\overline{T}$ and $T$ is a direct summand of $U_N$, we have $U_N=T\oplus Y$ because $U_N$ is a lattice. If $x\in X$ then we can express $x$ uniquely as $x=t+y$  with   $t\in T$ and $y\in Y$, thus we can   define $\alpha:X\longrightarrow U$ by $\alpha(x)=t$. Then, $\gamma\alpha(x)=\theta_X\varphi(\alpha(x))=\theta_X(\varphi(t))=\theta_X(x-y)=x$. Since $U_N=T\oplus Y=X\oplus Y$, each $0\neq t\in T\leqslant U_N$ is expressible uniquely as  $t=x+y$ with $x\neq 0$. It follows that $t=\alpha(x) $ and $\Ima \alpha=T$.
        \end{proof}
    \begin{lemma}\label{split-over-N}
        If the lattice $U$ satisfies:
            \begin{enumerate}
             \item $U_N$ is an $RG$-permutation module,
             \item $U\res{N}=\dire{H\in \Gamma}F(H)$
             and  $\overline{F(N)}$ has an $RG$ complement in $\overline{U_N}$,
        
            \end{enumerate}
        then, there exists an exact sequence of $RG$-lattices  
        $$\xymatrix{0\ar[r]& K\ar[r]& U\ar[r]& X\ar[r]& 0},$$
        such that $X$ and $K_N$ are $RG$-permutation modules, $X\downarrow_N\simeq F(N) $ and $U\downarrow_N\simeq K\oplus X$.
    \end{lemma}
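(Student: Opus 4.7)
The plan is to lift the $\F G$-decomposition of $\overline{U_N}$ to $U_N$ itself and then appeal to Lemma \ref{t9}. By Lemma \ref{F(N) is invari}(1), $\overline{F(N)}$ is a $G$-invariant $\F N$-submodule of $\overline{U_N}$, and by hypothesis it admits an $\F G$-complement $W'$. Since $U_N$ is an $RG$-permutation module, Lemma \ref{permuExt=0} gives $\Ext^1_{RG}(U_N,U_N)=0$, so Lemma \ref{Ext=0} lifts this to an $RG$-decomposition $U_N = X \oplus Y$ with $\overline{X} = \overline{F(N)}$. Because $G$ is a $p$-group and $R$ is a complete discrete valuation ring in mixed characteristic, Krull--Schmidt holds for $RG$-lattices and the indecomposable permutation modules are the $R[G/K]$'s; hence both $X$ and $Y$ are $RG$-permutation modules.

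Next, I apply Lemma \ref{t9} with $T = F(N)$ and the decomposition $U_N = X \oplus Y$. The map $\gamma: U \xrightarrow{\varphi} U_N \xrightarrow{\theta_X} X$ is $RG$-equivariant (both $\varphi$ and $\theta_X$ are), and Lemma \ref{t9} tells us it is $RN$-split, with $U\res{N} = \ker \gamma \oplus F(N)$ and the constructed map $\alpha$ providing an $RN$-isomorphism $X\res{N} \simeq F(N)$. Setting $K := \ker \gamma$, I obtain the desired short exact sequence of $RG$-lattices
$$\xymatrix{0 \ar[r] & K \ar[r] & U \ar[r]^{\gamma} & X \ar[r] & 0},$$
with $X\res{N} \simeq F(N)$ and $U\res{N} \simeq K \oplus X$.

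Finally, I identify $K_N$. Since the sequence is $RN$-split, applying $(-)_N$ preserves exactness, yielding an $R[G/N]$-short exact sequence $0 \to K_N \to U_N \to X_N \to 0$. Because $N$ acts trivially on $X$ (as $X\res{N} \simeq F(N)$ has trivial $N$-action), $X_N = X$, and the right-hand map coincides with the $RG$-projection $\theta_X: U_N \to X$; hence $K_N$ is isomorphic, as an $R[G/N]$-module, to $\ker \theta_X = Y$, which is an $RG$-permutation module by the first paragraph. The main subtlety is the bookkeeping between the $RN$-splitting from Lemma \ref{t9} and the $RG$-structure of $U_N$, together with the Krull--Schmidt argument that direct summands of a permutation module for a $p$-group are again permutation modules.
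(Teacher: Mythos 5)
Your proof is correct and follows essentially the same route as the paper's: lift the $\F G$-decomposition of $\overline{U_N}$ via Lemma \ref{permuExt=0} and Lemma \ref{Ext=0}, feed the resulting $RG$-decomposition into Lemma \ref{t9}, and then identify $K_N$ with the complement $Y$ using that $\gamma_N = \theta_X$. The only cosmetic difference is that you invoke Krull--Schmidt explicitly to conclude that $X$ and $Y$ are permutation modules, whereas the paper states this implicitly at the end by noting that $K_N$ and $X$ are direct summands of the permutation module $U_N$.
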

        \begin{proof}
         By Lemma \ref{F(N) is invari}, $\overline{F(N)}$ is a $RG$-submodule of $\overline{U_N}$. By  Lemmas \ref{Ext=0} and \ref{permuExt=0}, there exists a $RG$-decomposition $U_N=X\oplus Y$ with $\overline{X}=\overline{F(N)}$.  From  Lemma \ref{t9}, we get an  exact sequence of $RG$-modules 
         $$ \xymatrix{0\ar[r] & K \ar[r] & U \ar[r]^{\gamma} & X \ar[r] & 0}$$
         that splits over $N$, with $K=\ker \gamma$. It follows that $U\downarrow_N=K\oplus F(N)$. So, we get  an exact sequence that splits over $G$: 
         $$\xymatrix{0\ar[r]&K_N\ar[r]&U_N\ar[r]^{\gamma_N}& X\ar[r]&0},$$ because $\gamma_N$ coincide with the projection of $U_N$ on $X$ relative to the decomposition $U_N=X\oplus Y$.
         Since $U_N$ is an $RG$-permutation module, it follows that  $K_N$ and $X$ are  $RG$-permutation modules.
    
        \end{proof}
         %\begin{prop}\label{t11}
        %Let $U$ be an $RG$-lattice and suppose that in the following commutative diagram the rows are exact: 
        
         %$$\xymatrix{ 0\ar[r]& (U_N)^G\ar[r]^{\pi}\ar[d]^-{\rho^G}&(U_N)^G\ar[r]\ar[d]^-{\rho^G}&\overline{U_N}^G\ar[r]\ar[d]^-{\overline{\rho}^G}&0\\
         %0\ar[r]& ((U/U^N)_N)^G\ar[r]^{\pi}&((U/U^N)_N)^G\ar[r]&\overline{(U/U^N)_N}^G\ar[r]&0}.$$
         %If the map $\rho^G$ is surjective  then the induced map $\overline{\rho}^G $ is surjective.
    %\end{prop}
    
    \begin{lemma}\label{G/N-coflasque}
        Let $U$ be an $RG$-lattice. Suppose that $N$ is a normal subgroup of $G$ such that: 
        \begin{enumerate}
            \item $U\res{N}=\dire{H\in \Gamma}F(H)$ with $F(N)=0$,
            \item $U_N$ is $G/N$-coflasque.    
        \end{enumerate}
        Then $U^N$ is $G/N$-coflasque.  
    \end{lemma}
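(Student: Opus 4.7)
The plan is to establish an $R[G/N]$-module isomorphism $\phi: U_N \xrightarrow{\sim} U^N$. Once this is done, the conclusion is immediate: for any subgroup $L/N$ of $G/N$, $H^1(L/N, U^N) \cong H^1(L/N, U_N) = 0$ by hypothesis (2), whence $U^N$ is $G/N$-coflasque.

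I first analyze the canonical $R[G/N]$-linear map $\iota: U^N \to U_N$, namely the composition $U^N \hookrightarrow U \twoheadrightarrow U_N$. Applying Proposition~\ref{tor}(1) to each indecomposable summand $R[N/H]$ of $F(H)$ (with $H<N$ since $F(N)=0$) and summing over multiplicities, $\iota$ restricted to $F(H)^N$ is multiplication by $[N:H]$ via the natural identifications $F(H)^N \cong R^{m_H} \cong F(H)_N$. Consequently $\iota$ is injective with image $\bigoplus_H [N:H]\,F(H)_N \subseteq U_N$.

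I then define $\phi$ as follows: for $v$ lying in the block $F(H)_N \subseteq U_N$, set $\phi(v) \in F(H)^N$ to be the unique element with $\iota(\phi(v)) = [N:H]\,v$, and extend $R$-linearly across the block decomposition $U_N = \bigoplus_H F(H)_N$. Existence and uniqueness of $\phi(v)$ follow because $[N:H]\,v$ lies in $\iota(F(H)^N) = [N:H]\,F(H)_N$ and $\iota\res{F(H)^N}$ is injective (the latter because $F(H)^N$ is $R$-torsion-free). Working block by block, $\phi$ is an $R$-module isomorphism between lattices of equal rank.

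The main delicate point is verifying that $\phi$ is $R[G/N]$-equivariant. For $g\in G$ and $v\in F(H)_N$, $gv$ lies in $F(g\cdot H)_N$, where $g\cdot H\in\Gamma$ is the representative of the $N$-conjugacy class of $gHg^{-1}$. Crucially, $[N:g\cdot H]=[N:H]$, since conjugation in the $p$-group $N$ preserves the orders of subgroups. Combined with the $R[G/N]$-equivariance of $\iota$, this yields $\iota(\phi(gv)) = [N:H]\,gv = g\cdot\iota(\phi(v)) = \iota(g\phi(v))$, and injectivity of $\iota$ forces $\phi(gv) = g\phi(v)$. Thus $\phi$ is $R[G/N]$-linear, completing the construction of the desired isomorphism.
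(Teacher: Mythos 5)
Your proposed argument is not correct: the map $\phi$ you construct is not shown to be $R[G/N]$-linear, and the underlying isomorphism $U_N\cong U^N$ that you assert does not follow from the hypotheses.

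The fatal step is the claim that for $g\in G$ and $v\in F(H)_N$ one has $gv\in F(g\cdot H)_N$. The decomposition $U\res{N}=\bigoplus_{H\in\Gamma}F(H)$ is only an $RN$-decomposition; nothing in the hypotheses makes it $G$-stable, and in general $G$ does not permute the submodules $F(H)$ of $U$ (nor their images $F(H)_N$ in $U_N$) according to conjugation of subgroups. For $g\in G$ the element $gv$ will typically have nonzero components in several blocks $F(K)_N$ with $|K|\ne|H|$. What the paper actually establishes (Proposition \ref{block}) is only that the \emph{filtration pieces} $\bigoplus_{|H|\ge p^k}\overline{\rho(F(H)_N)}$ are $G$-stable \emph{modulo $\pi$} --- not the individual blocks, and not over $R$. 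Since your $\phi$ is the block-diagonal map ``divide by $[N:H]$ on $F(H)_N$'', and the scalar $[N:H]$ varies from block to block, $G$-equivariance genuinely requires the blocks themselves to be permuted; without that, the computation $\iota(\phi(gv))=[N:H]\,gv$ is simply false because $gv$ need not live in a single block of index $[N:H]$.

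More globally, the statement $U_N\cong U^N$ as $R[G/N]$-modules is not a consequence of the hypotheses, and the lemma does not assert it. The canonical (and $G$-equivariant) relations between $U_N$ and $U^N$ are the restriction map $\iota:U^N\hookrightarrow U_N$ and the norm map $\mathcal{N}:U_N\to U^N$, and these satisfy $\iota\circ\mathcal{N}=|N|\cdot\mathrm{id}$ and $\mathcal{N}\circ\iota=|N|\cdot\mathrm{id}$; neither is an isomorphism once some $F(H)$ with $1<H<N$ occurs. Your $\phi$ is exactly the block-by-block rescaling needed to make one of them invertible, which is why it cannot be expected to be $G$-equivariant. The paper instead works with the exact sequence
$$0\longrightarrow U^N\longrightarrow U_N\stackrel{\rho}{\longrightarrow}(U/U^N)_N\longrightarrow 0,$$
and, using the $\F$-linear isomorphism $\overline{U_N}\cong\overline{(U/U^N)_N}$ of Lemma \ref{F(N) is invari} (valid since $F(N)=0$) together with a Nakayama argument, shows that $\rho^H$ is surjective for every $H\geq N$; the vanishing $\mathrm{H}^1(H,U_N)=0$ then forces $\mathrm{H}^1(H,U^N)=0$. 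That route avoids any claim of a $G$-equivariant identification of $U_N$ with $U^N$, which is precisely the part of your argument that is unsupported.
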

    \begin{proof}
        For every $H\leqslant G$ such that $N\leqslant H$, the exact sequence
        $$\xymatrix{0\ar[r]& U^N\ar[r]^{\varphi^N}&U_N\ar[r]^-{\rho}&(U/U^N)_N\ar[r]&0}$$
        induces the exact sequence 
        $$\xymatrix{0\ar[r]& U^H\ar[r]^{\varphi^H}&(U_N)^H\ar[r]^-{\rho^H}&((U/U^N)_N)^H\ar[r]&\text{H}^1(H,U^N)\ar[r]&H^1(H,U_N)}.$$ Since $U_N$ is $G/N$-coflasque, we have $\text{H}^1(H,U_N)=0$.  So, we must show that the map $\rho_H: (U_N)^H\longrightarrow ((U/U^N)_N)^H$ is surjective. Since $F(N)=0$, we have $\overline{U_N}\simeq \overline{(U/U^N)_N}$ by Lemma \ref{F(N) is invari}. It follows that $\overline{(U)_N}^H\simeq \overline{(U/U^N)_N}^H$. Consider the following commutative diagram given by the obvious maps:
        $$\xymatrix{ (U_N)^H\ar[d]\ar[r]^-{\rho^H}& ((U/U^N)_N)^H\ar[d]\\\overline{U_N}^H\ar[r]^-{\overline{\rho}^H}&\overline{(U/U^N)_N}^H}$$ 
        Since the maps  $(U_N)^H\longrightarrow\overline{U_N}^H$  and $\overline{\rho}^H:\overline{(U_N)}^H\longrightarrow \overline{(U/U^N)_N}^H$ are surjective, the map
        $$((U/U^N)_N)^H\longrightarrow \overline{(U/U^N)_N}^H$$ is surjective. Then we have   
        $\overline{(U_N)^H}=\overline{U_N}^H\simeq \overline{(U/U^N)_N}^H=\overline{((U/U^N)_N)^H}$. Thus,  we have   $\overline{\rho^H}=\overline{\rho}^H$, and  by Nakayama's Lemma the map $\rho^H: (U_N)^H\longrightarrow ((U/U^N)_N)^H$ is surjective.  It follows that $\text{H}^1(H,U^N)=0$. By the inflation-restriction sequence \cite[Theorem 9.84]{Rotman},  we get $H^1(H/N,U^N)=H^1(H,U^N)$, thus  $U^N$ is $G/N $-coflasque.

    \end{proof}
    \begin{remark} Recall that an $RG$-permutation module is  $RG$-coflasque  \cite[Lemma 1.5]{Gregory}. However, the converse is not true in general.
    \end{remark} 
    \begin{prop}\label{block}
        Let $U$ be an $RG$-lattice and $N\lhd G$ with $|N|=p^l$. Let $\rho: U_N\to (U/U^N)_N$ be the natural projection and  suppose that $U\res{N}=\displaystyle{\bigoplus_{H\in \Gamma}F(H)}$. For any $k\geq 0$,  the $RN$-module  $\dire{\substack{H\in\Gamma\\|H|\geq p^k}}\overline{\rho(F(H)_N)}$ is an $RG$-submodule of $\overline{(U/U^N)_N}$. In particular, $\dire{\substack{H\in\Gamma\\|H|\geq p^k}}\overline{F(H)_N}$ is $G$-invariant in $\overline{U_N}$.
  \end{prop}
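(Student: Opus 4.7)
The plan is to exhibit $\bigoplus_{|H|\geq p^k}\overline{\rho(F(H)_N)}$ as the image under the reduction-mod-$\pi$ map of a manifestly $RG$-invariant $p$-torsion submodule of $(U/U^N)_N$. Concretely, since $p$ is central in $RG$, the submodule
\[A_k := \{x\in (U/U^N)_N : p^{l-k}x = 0\}\]
is automatically $RG$-stable, so its image in $\overline{(U/U^N)_N}$ is an $\F G$-submodule. The content of the argument is then to check that this image coincides with the claimed direct sum.

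By Proposition \ref{tor}(1), for $H$ of order $p^j$ the module $\rho(F(H)_N)$ is a direct sum of copies of $R/p^{l-j}R$, and the decomposition $U\res{N}=\bigoplus_{H\in\Gamma}F(H)$ descends to an $R$-module decomposition $(U/U^N)_N=\bigoplus_{H\in\Gamma}\rho(F(H)_N)$. I would then intersect $A_k$ summand-wise. If $|H|\geq p^k$, so $j\geq k$ and $l-k\geq l-j$, then $p^{l-k}$ already kills $R/p^{l-j}R$ and the entire summand lies in $A_k$. If $|H|<p^k$, the $p^{l-k}$-torsion in $R/p^{l-j}R$ is $p^{k-j}R/p^{l-j}R$, which sits inside $\pi\rho(F(H)_N)$ because $k-j\geq 1$ and $p\in\pi R$ by mixed characteristic. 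Reducing mod $\pi$, the ``large'' summands survive intact while the ``small'' ones vanish, so the image of $A_k$ is exactly $\bigoplus_{|H|\geq p^k}\overline{\rho(F(H)_N)}$, and this image inherits $RG$-invariance from $A_k$.

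For the ``in particular'' assertion, I pull the submodule just constructed back through the $G$-equivariant surjection $\overline{\rho}:\overline{U_N}\to\overline{(U/U^N)_N}$ of Lemma \ref{F(N) is invari}(2). Its kernel is $\overline{F(N)}$, and for $H<N$ the restriction of $\rho$ to $F(H)_N$ is the quotient $F(H)_N\to F(H)_N/p^{l-j}F(H)_N$ whose kernel sits in $\pi F(H)_N$, so $\overline{\rho}$ restricts to an isomorphism $\overline{F(H)_N}\xrightarrow{\sim}\overline{\rho(F(H)_N)}$ for each $H<N$. The pullback is therefore $\overline{F(N)}\oplus\bigoplus_{p^k\leq|H|<|N|}\overline{F(H)_N}=\bigoplus_{|H|\geq p^k}\overline{F(H)_N}$ when $p^k\leq|N|$, with the remaining case being vacuous.

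The main obstacle in the whole argument is the reduction-mod-$\pi$ step: one must use that $p^{k-j}\in\pi R$ whenever $k>j$, a feature of the mixed characteristic hypothesis which is what causes the ``small order'' summands to disappear modulo $\pi$ and allows the $p^{l-k}$-torsion filtration to cut out precisely the submodule specified in the proposition. Once this is in place, the rest is a direct bookkeeping exercise with the canonical direct sum decomposition of $(U/U^N)_N$.
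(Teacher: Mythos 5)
Your proof is correct, and it reorganizes the paper's argument into a tidier form. The paper proves $G$-invariance directly: it takes $w\in\bigoplus_{|H|\geq p^k}\rho(F(H)_N)$, writes $gw = w_1 + w_2$ with $w_2$ in the complementary sum, and uses the relation $p^{l-k}gw = 0$ together with the observation that $p^{l-k}$ has nontrivial action on the small-$H$ summands to force $w_2\in\pi(U/U^N)_N$. You instead characterize the target module intrinsically as the mod-$\pi$ image of the $p^{l-k}$-torsion submodule $A_k$, whose $G$-stability is automatic, and then carry out the same torsion computation summand-by-summand to identify the image. The underlying observation (the $p^{l-k}$-torsion in $R/p^{l-j}R$ lies in $\pi(R/p^{l-j}R)$ when $j<k$, using that $p\in\pi R$ in mixed characteristic) is identical; what your framing buys is that $G$-equivariance never has to be checked by hand, since it is inherited from the functoriality of taking torsion and reducing mod $\pi$. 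Your treatment of the ``in particular'' clause via pulling back along $\overline{\rho}$, using that $\overline{\rho}$ restricts to an isomorphism on each $\overline{F(H)_N}$ with $H<N$ (because $\ker(\rho|_{F(H)_N}) = p^{l-j}F(H)_N\subseteq\pi F(H)_N$), makes explicit what the paper states briefly via Lemma \ref{F(N) is invari}, and is correct.
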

  \begin{proof}
     First, we observe that from the decomposition $U\res{N},$ we get  $$(U/U^N)_N=\dire{0\leq k\leq l-1}\dire{\substack{H\in \Gamma\\ |H|=p^k}} \rho(F(H)_N).$$ Also observe that   $\dire{\substack{H\in \Gamma\\ |H|=p^k}} \rho(F(H)_N)$ is an $R/p^{l-k}R$-free module by  Proposition \ref{tor}. Thus, $p^{l-k}$ acts as $0$ on $\dire{\substack{H\in \Gamma\\ |H|\geq p^k}} \rho(F(H)_N)$. Let $w\in\dire{\substack{H\in \Gamma\\ |H|\geq p^k}} \rho(F(H)_N)$, then $$gw=w_1+ w_2$$ where $$w_1\in \dire{\substack{H\in \Gamma\\ |H|\geq p^k}} \rho(F(H)_N) \;\;\text{and}\;\;w_2\in \dire{\substack{H\in \Gamma\\ |H|< p^k}} \rho(F(H)_N).$$ Since $p^{l-k}w=0$, we have $p^{l-k}gw=0$ and hence $p^{l-k}w_2=0$, because $p^{l-k}w_1=0$. It follows that $w_2\in \pi(U/U^N)_N$ since by Proposition \ref{tor}, $p^{l-k}$ does not act as $0$ on $\dire{\substack{H\in \Gamma\\ |H|< p^k}}\rho(F(H)_N)$. Therefore,  $\dire{\substack{H\in \Gamma\\ |H|\geq p^k}} \overline{\rho(F(H)_N)}$ is $G$-invariant in $\overline{(U/U^N)_N}$. By Lemma \ref{F(N) is invari}, we have a natural isomorphism $$\overline{(U/U^N)_N}\simeq \overline{U_N}/\overline{F(N)},$$ so the module  $\dire{\substack{H\in \Gamma\\ |H|\geq p^k}} \overline{F(H)_N}$ is an $RG$-submodule of $U_N$. 
  \end{proof}
    In the rest of the text we will make use of the notations introduced here. So suppose that  $U\res{N}=\bigoplus_{H\in \Gamma}F(H)$ and  let $M$ be a maximal subgroup of $N$ with $M\lhd G$.  By the Mackey decomposition formula \cite[Theorem 3.3.4]{benson}, we have $$R[N/H]\res{M}\simeq \dire{MgH}R[M/g(H\cap M)g^{-1}]\simeq \dire{MgH}R[M/H\cap M].$$ Thus, $F(H)\res{M}$ is a direct sum of modules of the form $R[M/H\cap M]$. Now, consider the restriction $(U\res{N})\res{M}=\dire{H\in \Gamma}F(H)\res{M}=\dire{H\in \Gamma'}F'(H)$ where $\Gamma'$ is a set of representatives of the distinct  orbits of the action by conjugation of $M$ on the set of  subgroups of $M$ and where $$F'(H)=\dire{\substack{K\in \Gamma\\K\cap M\in H^M}}F(K),$$
    where $H^M$ is the set of the $M$-conjugates of $H$.  
    
     With these notations, we have $$\dire{\substack{H\in\Gamma'\\ |H|\geq  p^k}}F'(H)=\dire{\substack{H\in \Gamma\\ |H\cap M|\geq p^k}}F(H).$$  The   Proposition \ref{block} applied to $M$ give us that the $RN$-submodule  $\dire{\substack{H\in\Gamma'\\ |H|\geq  p^k}}\overline{F'(H)_M}\leqslant \overline{U_M}$ is  an $RG$-mod\-ule. Hence, $$\dire{\substack{H\in\Gamma'\\ |H|\geq  p^k}}\overline{F'(H)_N}=\dire{\substack{H\in \Gamma\\ |H\cap M|\geq p^k}}\overline{F(H)_N}$$ is an $RG$-submodule of $\overline{U_N}$.   
   
    Let $M$ be a maximal subgroup of $N$ with $M\lhd G$. Then we can write $$(U_M)\res{N}=\dire{H\in \Gamma}F(H)_M$$ and relative to this decomposition, the maximal $R[N/M]$-trivial summand is $$F(N/M)=\dire{\substack{H\in \Gamma\\ H\nleqslant M}}F(H)_M,$$  because $(R[N/H])_M=R[N/MH]$, and $N=MH$ if and only if $H\nleqslant M$. 
    Since $(U_M)_N= U_N$ we have by Lemma \ref{F(N) is invari} that $\overline{F(N/M)_N}=\overline{F(N/M)}$ is an $RG$-submodule of $\overline{U_N}$.

  \begin{theorem}\label{M-block}
        Let $U$ be an $RG$-lattice and let  $M,N\lhd G$ with $M$ being a maximal subgroup of $N$. Suppose that $U\res{N}=\displaystyle{\bigoplus_{H\in \Gamma}F(H)}$ and  that $\dire{|H|\geq p^k}\overline{F(H)_N}$ is a direct summand of $\overline{U_N}$, for $k=1,...,|N|$.  Then $\dire{\substack{H\in \Gamma'\\ |H|\geq p^k}}\overline{F'(H)_N}$ is a direct summand of $\overline{U_N}$ as an $RG$-module.    
  \end{theorem}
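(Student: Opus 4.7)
The plan is to use the hypothesized splitting of the $A$-filtration of $\overline{U_N}$ to reduce the statement to the graded piece $A_k/A_{k+1}$, and then to decompose that piece using two canonical $G$-invariant submodules that the paper has already constructed in the discussion preceding the theorem.

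First I would record the inclusions $A_{k+1}\subseteq B_k\subseteq A_k$, where $A_j:=\bigoplus_{H\in\Gamma,\,|H|\geq p^j}\overline{F(H)_N}$ and $B_k:=\bigoplus_{H\in\Gamma',\,|H|\geq p^k}\overline{F'(H)_N}=\bigoplus_{H\in\Gamma,\,|H\cap M|\geq p^k}\overline{F(H)_N}$. Both follow from $M$ being maximal in $N$, which forces $|H:H\cap M|\in\{1,p\}$ for every $H\leq N$. Using the hypothesis, I pick $RG$-complements $\overline{U_N}=A_k\oplus C$ and $A_k=A_{k+1}\oplus D_k$; the second splitting exists because any direct summand of $\overline{U_N}$ contained in $A_k$ is automatically a direct summand of $A_k$ (intersect a complement of $A_{k+1}$ in $\overline{U_N}$ with $A_k$). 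The projection induces an $\F G$-isomorphism $D_k\simeq A_k/A_{k+1}$, and $B_k=A_{k+1}\oplus(B_k\cap D_k)$. Thus $B_k$ is an $RG$-direct summand of $\overline{U_N}$ if and only if $B_k/A_{k+1}$ is an $\F G$-direct summand of $A_k/A_{k+1}$.

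At the graded-piece level, as $\F$-vector spaces $A_k/A_{k+1}=\bigoplus_{|H|=p^k}\overline{F(H)_N}$ and $B_k/A_{k+1}=\bigoplus_{|H|=p^k,\,H\leq M}\overline{F(H)_N}$. The natural candidate complement is $T_k:=\bigoplus_{|H|=p^k,\,H\not\leq M}\overline{F(H)_N}$, and the heart of the argument is to show that $T_k$ is $\F G$-invariant. For this I would invoke the second canonical submodule produced just before the theorem: applying Lemma \ref{F(N) is invari} to the lattice $U_M$ with normal subgroup $N/M$ of $G/M$, the maximal $R[N/M]$-trivial summand gives a decomposition-independent $G$-invariant submodule $T:=\overline{F(N/M)}=\bigoplus_{H\not\leq M}\overline{F(H)_N}$ of $\overline{U_N}$. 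Then $T\cap A_k$ is $G$-invariant in $A_k$ and its image in $A_k/A_{k+1}$ is precisely $T_k$, so $T_k$ is $G$-invariant. Combined with the $G$-invariance of $B_k/A_{k+1}$, inherited from $B_k$ being an $RG$-submodule of $\overline{U_N}$ via Proposition \ref{block} applied to $M$, and the elementary equalities $T_k\cap(B_k/A_{k+1})=0$ and $T_k+(B_k/A_{k+1})=A_k/A_{k+1}$, this gives the $\F G$-decomposition $A_k/A_{k+1}=(B_k/A_{k+1})\oplus T_k$. Lifting the splitting through $D_k\simeq A_k/A_{k+1}$ produces $A_k=B_k\oplus\widetilde T_k$ for some $\F G$-submodule $\widetilde T_k$, hence $\overline{U_N}=B_k\oplus\widetilde T_k\oplus C$.

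The main obstacle is the $G$-invariance of $T_k$ inside the graded piece: the individual summands $\overline{F(H)_N}$ are not $G$-invariant in general, and the $A$-filtration by itself cannot distinguish the \enquote{$H\leq M$} from the \enquote{$H\not\leq M$} parts within a single graded piece. The crucial observation is that Lemma \ref{F(N) is invari} applied to $U_M$ has already supplied the complementary canonical submodule $T$; once both canonical $G$-invariant pieces are in hand, the decomposition of $A_k/A_{k+1}$ is forced, and the remainder is direct-sum bookkeeping along the filtration.
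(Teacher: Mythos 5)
Your proof is correct and follows essentially the same strategy as the paper: both rely on the $RG$-invariance of $B_k=\bigoplus_{|H\cap M|\geq p^k}\overline{F(H)_N}$ (Proposition \ref{block} applied to $M$), the $RG$-invariance of $\overline{F(N/M)}$ (Lemma \ref{F(N) is invari} applied to $U_M$), and the hypothesis that the $A_k$ are direct summands, then split off the complement. The only cosmetic difference is presentational: you pass to the graded piece $A_k/A_{k+1}$ and lift the resulting decomposition through a complement $D_k$, whereas the paper forms $Z_k = (\overline{F(N/M)}\cap A_k) + A_{k+1}$ and splits $A_{k+1}$ off inside $Z_k$ to produce $W_k$ with $A_k = B_k\oplus W_k$; these are two renderings of the same argument.
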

  \begin{proof}
     Write $$\dbar{\substack{H\in \Gamma\\|H|\geq p^k}}{F(H)_N}=\dbn{\substack{H\in \Gamma\\ |H\cap M|\geq p^k}}{H}\;\oplus \dbn{\substack{H\in \Gamma\\ |H\cap M|<p^k\\ |H|\geq p^k}}{H}.$$ Now, observe that $$\dbn{\substack{H\in \Gamma\\ |H\cap M|<p^k\\ |H|\geq p^k}}{H}=\dbn{\substack{H\in \Gamma\\  H\nleqslant M\\|H|=p^k}}{H},$$ because being $M$ maximal we have $|H\cap M|> p^{k-1}$ if $|H|> p^k$. Since $$F(N/M)=\dire{\substack{H\in \Gamma\\ H\nleqslant M\\ |H|\geq p}}F(H),$$ we get 
     $$\overline{F(N/M)}\cap \dbn{\substack{H\in \Gamma\\ |H|\geq p^k}}{H}=\dbn{\substack{H\in \Gamma\\ |H\cap M|<p^k\\ |H|= p^k}}{H}\;\oplus\dbn{\substack{H\in \Gamma\\ H\nleqslant M\\ |H\cap M|\geq p^k}}{H}.$$
     By Lemma \ref{F(N) is invari} and Proposition \ref{block}, we have  $\overline{F(N/M)}$  and $\dbn{\substack{H\in \Gamma\\ |H|\geq p^k}}{H}$ $RG$-modules, thus the  intersection of these $RG$-modules is an $RG$-module. 
     It follows that $$Z_k:=\dbn{\substack{H\in \Gamma\\ |H\cap M|<p^k\\ |H|=p^k}}{H}\;\oplus\dbn{\substack{H\in \Gamma\\|H|\geq p^{k+1}}}{H}=\overline{F(N/M)}\cap \dbn{\substack{H\in \Gamma\\ |H|\geq p^k}}{H} + \dbn{\substack{H\in \Gamma\\|H|\geq p^{k+1}}}{H},$$
     is an $RG$-module by  Proposition \ref{block}. Since $\dbn{|H|\geq p^{k+1}}{H}$ is an $RG$-direct summand of $\overline{U_N}$, there exists an $RG$-complement $W_{k}\leqslant Z_k$ such that $$Z_k=W_k\oplus \dbn{|H|\geq p^{k+1}}{H}.$$ Note that   $Z_k\cap \left(\dbn{\substack{H\in \Gamma\\ |H\cap M|\geq p^{k}}}{H}\right)=\dbn{\substack{H\in \Gamma\\|H|\geq p^{k+1}}}{H}$, from which   we get $$\dbn{|H|\geq p^{k}}{H}=\dbn{\substack{H\in \Gamma\\|H\cap M|\geq p^k}}{H}\oplus W_k.$$  By  Proposition \ref{block}, we have $\dire{\substack{H\in \Gamma'\\ |H|\geq  p^k}}\overline{F'(H)_N}=\dbn{\substack{H\in \Gamma\\ |H\cap M|\geq p^k}}{H}$ an $RG$-module, therefore an $RG$-direct summand of $\dbn{|H|\geq p^k}{H}$, and hence of $\overline{U_N}$.
     \end{proof}
    
    \begin{prop}\label{sumanofU_N}
         Let $U$ be an $RG$-lattice and $N\lhd G$ with $|N|=p^l$. Suppose that $U\res{N}=\displaystyle{\bigoplus_{H\in \Gamma}F(H)}$ with  $\dire{|H|\geq p^k}\overline{F(H)_N}$ being a  direct summand of $\overline{U_N}$, for $k=1,...,l$. If $M$ is a maximal subgroup of $N$ with $M\lhd G$, then 
        \begin{enumerate}
        \item \label{1sum}  $\overline{F(N/M)}$ is an $RG$-direct summand of $\overline{U_N}$;
        \item $\overline{F(N/M)}\cap \left(\dire{\substack{H\in \Gamma'\\ |H|\geq p^k}}\overline{F'(H)_N}\right)$ is an $RG$-direct summand of $\dire{\substack{H\in \Gamma'\\ H\geq p^k}}\overline{F'(H)_N}$, for $k=1,...,l-1$;
        \item $\dire{\substack{H\in \Gamma\\ H\nleqslant M\\ |H|<p^{k+1}}}\overline{F(H)_N}\;\oplus\dire{\substack{H\in \Gamma'\\|H|\geq p^k}}\overline{F'(H)_N}$ is an $RG$-direct summand of $\overline{U_N}$, for $k=1,...,l-1$. 
        \end{enumerate}
    \end{prop}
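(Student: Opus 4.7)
The plan is to refine the filtration $(A_k)_k$ from the hypothesis into a full $RG$-module decomposition of $\overline{U_N}$ indexed simultaneously by $|H|$ and by whether $H\leq M$ or $H\not\leq M$. For $0\leq j\leq l$, set
\[
V_j^M := \bigoplus_{\substack{H \in \Gamma \\ H \not\leq M,\ |H| = p^j}} \overline{F(H)_N}, \qquad V_j^{\leq M} := \bigoplus_{\substack{H \in \Gamma \\ H \leq M,\ |H| = p^j}} \overline{F(H)_N}.
\]
My first step is to argue that each $V_j^M$ and $V_j^{\leq M}$ is an $RG$-submodule of $\overline{U_N}$, yielding an $RG$-module decomposition $\overline{U_N} = \bigoplus_{j=0}^{l} V_j^{\leq M} \oplus \bigoplus_{j=0}^{l} V_j^{M}$. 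This holds because $G$ permutes the summands $\overline{F(H)_N}$ according to the action on $\Gamma$ induced by conjugation on subgroups of $N$, and both conditions ``$|H|=p^j$'' and ``$H\leq M$'' are $G$-invariant (the latter because $M\lhd G$). Alternatively, the individual pieces can be assembled from intersections of the $RG$-submodules supplied by Proposition \ref{block} and Theorem \ref{M-block}, together with the $RG$-summand structure furnished by the hypothesis at each level.

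With this decomposition in hand, each of the three parts reduces to rewriting the module of interest as a sum of $V_j^M$'s and $V_j^{\leq M}$'s and reading off an $RG$-complement. For Part 1 one notices $\overline{F(N/M)} = \bigoplus_{j=1}^{l} V_j^M$ (the $j=0$ contribution vanishes because $\{1\}\leq M$), with $RG$-complement $\bigoplus_{j=0}^{l} V_j^{\leq M}$ in $\overline{U_N}$. For Part 2, using that $H\not\leq M$ forces $|H\cap M|=|H|/p$, the intersection $\overline{F(N/M)}\cap\bigoplus_{|H'|\geq p^k}\overline{F'(H')_N}$ collapses to $\bigoplus_{j\geq k+1}V_j^M$; since the ambient module decomposes as $\bigoplus_{j\geq k}V_j^{\leq M}\oplus\bigoplus_{j\geq k+1}V_j^M$, the submodule $\bigoplus_{j\geq k}V_j^{\leq M}$ is the required $RG$-complement. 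For Part 3 the stated module works out to $\bigoplus_{j=1}^{l}V_j^M\oplus\bigoplus_{j\geq k}V_j^{\leq M}$, and its $RG$-complement in $\overline{U_N}$ is $V_0^{\leq M}\oplus\bigoplus_{j=1}^{k-1}V_j^{\leq M}$.

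The main obstacle is justifying the $G$-stability of $V_j^M$ and $V_j^{\leq M}$ at each single level $j$, because Proposition \ref{block} and Theorem \ref{M-block} only directly provide $G$-invariance for the cumulative sums $\bigoplus_{|H|\geq p^k}\overline{F(H)_N}$ and $\bigoplus_{|H\cap M|\geq p^k}\overline{F(H)_N}$. I would address this by downward induction on $j$, using that $A_j$ and $A_{j+1}$ are $RG$-direct summands of $\overline{U_N}$ (hence of each other) by hypothesis, and invoking Theorem \ref{M-block} at each level to split off the $V_j^{\leq M}$ piece and then the $V_j^M$ piece inside $A_j/A_{j+1}$. Once this $G$-stability is in place, the rest of the proof is purely combinatorial bookkeeping on the index set $\Gamma$ and requires no further homological input, in particular no further appeal to Lemmas \ref{Ext=0} or \ref{permuExt=0}.
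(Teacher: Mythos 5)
Your plan hinges on the claim that each level piece
\[
V_j^M = \bigoplus_{\substack{H\in\Gamma\\ H\nleqslant M,\ |H|=p^j}}\overline{F(H)_N}, \qquad V_j^{\leqslant M}=\bigoplus_{\substack{H\in\Gamma\\ H\leqslant M,\ |H|=p^j}}\overline{F(H)_N}
\]
is an $RG$-submodule of $\overline{U_N}$, which would yield a full $RG$-decomposition of $\overline{U_N}$ graded simultaneously by $j$ and by $H\leqslant M$ vs.\ $H\nleqslant M$. This claim is unjustified and in general false. The decomposition $U\res{N}=\bigoplus_{H\in\Gamma}F(H)$ is only an $RN$-decomposition; the action of $G$ on $U$ (and hence on $\overline{U_N}$) does \emph{not} permute the summands $\overline{F(H)_N}$, even though conjugation permutes the subgroups $H$. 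That is exactly why Proposition \ref{block} requires proof: it establishes $G$-invariance of the \emph{cumulative} sums $\bigoplus_{|H|\geq p^k}\overline{F(H)_N}$ by a torsion argument (they are, modulo $\pi$, the part annihilated by $p^{l-k}$), and that characterization singles out precisely those sums --- it gives nothing about the individual level pieces, nor about the split $H\leqslant M$ vs.\ $H\nleqslant M$ at a fixed level.

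Your fallback --- a downward induction on $j$ using that the cumulative sums $A_j$ are $RG$-direct summands --- does not close the gap. Knowing $A_{j+1}$ is an $RG$-direct summand of $A_j$ gives \emph{some} $RG$-complement of $A_{j+1}$ in $A_j$, but there is no reason for that complement to coincide with the coordinate subspace $V_j^M\oplus V_j^{\leqslant M}$: it will generically be a twisted copy. Likewise, a decomposition of the quotient $A_j/A_{j+1}$ does not lift to a direct sum of the prescribed coordinate pieces inside $\overline{U_N}$. The paper's proof is careful on precisely this point: at each stage it forms an $RG$-module $Z_k$ as a sum of two known $RG$-submodules, chooses an \emph{abstract} $RG$-complement $W_k$ of the known summand $\bigoplus_{|H|\geq p^{k+1}}\overline{F(H)_N}$ inside $Z_k$, and then uses an explicit intersection computation to swap one coordinate block for $W_k$ in a direct sum decomposition --- never asserting that the coordinate pieces themselves are $G$-stable. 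Your identification of the relevant subspaces (e.g.\ that the intersection in Part 2 is $\bigoplus_{j\geq k+1}V_j^M$, and the ambient module is $\bigoplus_{j\geq k}V_j^{\leqslant M}\oplus\bigoplus_{j\geq k+1}V_j^M$) is correct as bookkeeping, but the graded $RG$-decomposition you build everything on does not exist in general, so the proof as proposed does not go through.
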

     \begin{proof}
       \begin{enumerate}   
        \item First, observe that $$\dbn{\substack{H\in \Gamma\\|H|\geq p}}{H}=\dbn{\substack{H\in \Gamma\\ H\nleqslant M\\|H|\geq p}}{H}\oplus\dbn{\substack{H\in \Gamma\\ H\leqslant M\\|H|\geq p}}{H},$$ and $\overline{F(N/M)}=\dbn{\substack{H\in \Gamma\\ H\nleqslant M\\|H|\geq p}}{H}.$  
      Proposition \ref{block} applied to $M$ and $N$ gives us that  $$\dire{\substack{H\in \Gamma'\\|H|\geq p}}\overline{F'(H)_N}=\dire{\substack{H\in \Gamma\\|H\cap M|\geq p}}\overline{F(H)_N}=\dbn{\substack{H\in \Gamma\\ H\leqslant M\\|H|=p}}{H}\oplus\dbn{\substack{H\in \Gamma\\ |H\cap M|\geq p\\|H|\geq p^2}}{H}$$ and 
      $$Z_1:=\dbn{\substack{H\in \Gamma'\\|H|\geq p}}{H}+ \dbn{\substack{H\in \Gamma\\|H|\geq p^2}}{H}=\dbn{\substack{H\in \Gamma\\ H\leqslant M\\|H|=p}}{H}\;\oplus\dbn{\substack{H\in \Gamma\\|H|\geq p^2}}{H}$$
      are $RG$-modules. Since $\dbn{|H|\geq p^2}{H}$ is an $RG$-direct summand of $\overline{U_N}$, then there exists an $RG$-complement $W_{1}\leqslant Z_1$ such that $$Z_1=W_1\oplus \dbn{|H|\geq p^{2}}{H}.$$ Since $Z_1\cap \left(\dbn{\substack{H\in \Gamma\\ H\nleqslant M\\|H|\geq p}}{H}\;\oplus\dbn{\substack{H\in \Gamma\\H\leqslant M\\|H|\geq p^2}}{H}\right)=\dbn{|H|\geq p^{2}}{H}$, we get 
      $$\dbn{|H|\geq p}{H}=\dbn{\substack{H\in \Gamma\\ H\nleqslant M\\|H|\geq p}}{H}\;\oplus\dbn{\substack{H\in \Gamma\\H\leqslant M\\|H|\geq p^2}}{H}\oplus W_1.$$ 
     By similar arguments, we get  $RG$-submodules $ W_2\leqslant Z_2,...,W_{l-1}\leqslant Z_{l-1}$, where $$Z_{k}= \dbn{\substack{ H\in \Gamma\\H\leqslant M\\|H|=p^{k}}}{H}\;\oplus\dbn{\substack{H\in \Gamma\\ |H|\geq p^{k+1}}}{H}= \dbn{\substack{H\in \Gamma\\|H|\geq p^{k+1}}}{H}\oplus W_k\leqslant \dbn{|H|\geq p^{k}}{H},$$
      such that $$\dbn{\substack{H\in \Gamma\\|H|\geq p}}{H}=\dbn{\substack{H\in \Gamma\\ H\nleqslant M\\|H|\geq p}}{H}\oplus W_{l-1}\oplus W_{l-2}\oplus ...\oplus W_1.$$ 
        It follows that $$\overline{F(N/M)}=\dbn{\substack{H\in \Gamma\\ H\nleqslant M\\|H|\geq p}}{H}$$ is an $RG$-direct summand of $\dbn{\substack{H\in \Gamma\\|H|\geq p}}{H}$, and hence of $\overline{U_N}$. 

        \item Clearly, the result is true for $k=l-1$ and $k=l$. So suppose $1\leq k\leq l-2$.  We have $$\dire{\substack{H\in\Gamma'\\|H|\geq p^k}}\overline{F'(H)_N}=\dire{\substack{H\in \Gamma\\ |H\cap M|\geq p^k}}\overline{F(H)_N}=\dbn{\substack{H\in \Gamma\\ H\nleqslant M\\|H\cap M|\geq p^k}}{H}\;\oplus\dbn{\substack{H\in \Gamma\\ H\leqslant M\\|H|\geq p^k}}{H}$$ and $$\overline{F(N/M)}\cap \dire{\substack{H\in \Gamma\\ |H\cap M|\geq p^k}}\overline{F(H)_N}=\dbn{\substack{H\in \Gamma\\ H\nleqslant M\\|H\cap M|\geq p^k}}{H}.$$
        We will show that the above intersection is an $RG$-direct summand of $\dbn{\substack{H\in \Gamma\\ |H|\geq p^{k+1}}}{H}$ which is a direct summand of $\overline{U_N}$. For this, observe that  $$\dbn{\substack{H\in \Gamma\\ |H|\geq p^{k+1}}}{H}=\dbn{\substack{H\in \Gamma\\ H\nleqslant M\\|H\cap M|\geq p^k}}{H}\;\oplus\dbn{\substack{H\in \Gamma\\ H\leqslant M\\|H|\geq p^{k+1}}}{H}.$$ By Proposition \ref{block}, the $RN$-submodule   $$ \dire{\substack{H\in \Gamma'\\ |H|\geq p^{k+1}}}\overline{F'(H)_N}=\dbn{\substack{H\in \Gamma\\ |H\cap M|\geq p^{k+1}}}{H}=\dbn{\substack{H\in \Gamma\\ H\leqslant M\\ |H|=p^{k+1}}}{H}\;\oplus\dbn{\substack{H\in \Gamma\\ |H\cap M|\geq p^{k+1}\\|H|\geq p^{k+2}}}{H},$$
        is an $RG$-submodule, hence $$Z_k:=\dire{\substack{H\in \Gamma'\\ |H|\geq p^{k+1}}}\overline{F'(H)_N}+ \dbn{\substack{H\in \Gamma\\|H|\geq p^{k+2}}}{H}=\dbn{\substack{H\in \Gamma\\ H\leqslant M\\|H|=p^{k+1}}}{H}\;\oplus\dbn{\substack{H\in \Gamma\\|H|\geq p^{k+2}}}{H}$$ is an $RG$-submodule of $\dbn{\substack{H\in \Gamma\\ |H|\geq p^{k+1}}}{H}$. By  hypothesis, we have that $\dbn{\substack{H\in \Gamma\\ |H|\geq p^{k+2}}}{H}$ is an $RG$-direct summand of $\overline{U_N}$, it follows that we can find an $RG$-complement $W_{k}\leqslant Z_k$ such that $$Z_k=W_k\oplus \dbn{\substack{H\in \Gamma\\ |H|\geq p^{k+2}}}{H}.$$ Since $Z_k\cap \left(\dbn{\substack{H\in \Gamma\\ H\nleqslant M\\|H\cap M|\geq p^k}}{H}\;\oplus\dbn{\substack{H\in \Gamma\\ H\leqslant M\\|H|\geq p^{k+2}}}{H}\right)=\dbn{\substack{H\in \Gamma\\ |H|\geq p^{k+2}}}{H}$, we get 
        $$\dbn{\substack{H\in \Gamma\\ |H|\geq p^{k+1}}}{H}=\dbn{\substack{H\in \Gamma\\ H\nleqslant M\\|H\cap M|\geq p^k}}{H}\;\oplus\dbn{\substack{H\in \Gamma\\ H\leqslant M\\|H|\geq p^{k+2}}}{H}\oplus W_k.$$ Continuing in this way, we find an $RG$-submodule $W\leqslant \dbn{\substack{H\in \Gamma\\ |H|\geq p^{k+1}}}{H}$ such that $$\dbn{\substack{H\in \Gamma\\ |H|\geq p^{k+1}}}{H}=\dbn{\substack{H\in \Gamma\\ H\nleqslant M\\|H\cap M|\geq p^k}}{H}\oplus W.$$ By hypothesis, $\dbn{\substack{H\in \Gamma\\ |H|\geq p^{k+1}}}{H}$ is an $RG$-direct summand of $\overline{U_N}$.  Therefore, $\dbn{\substack{H\in \Gamma\\ H\nleqslant M\\|H\cap M|\geq p^k}}{H}$ is an $RG$-direct summand of $\dire{\substack{H\in \Gamma'\\ |H|\geq p^k}}\overline{F'(H)_N}$.
        \item 
         Note that $$\overline{F(N/M)}+ \dire{\substack{H\in \Gamma' \\|H|\geq p^{k} }}\overline{F'(H)_N}=\dbn{\substack{H\in \Gamma\\ H\nleqslant M\\|H|<p^{k+1}}}{H}\;\oplus\dire{\substack{H\in \Gamma' \\|H|\geq p^{k} }}\overline{F'(H)_N}$$ is an $RG$-module and  $$\overline{U_N}=\dbn{\substack{H\in \Gamma\\ H\nleqslant M\\|H|<p^{k+1}}}{H}\;\oplus\dire{\substack{H\in \Gamma' \\|H|\geq p^{k} }}\overline{F'(H)_N}\;\oplus\dbn{\substack{H\in \Gamma\\ H\leqslant M\\ |H|<p^k}}{H}.$$ Since $$\dire{\substack{H\in \Gamma'\\ |H|\geq p}}\overline{F'(H)_N}=\dbn{\substack{H\in \Gamma\\ H\leqslant M\\ |H|=p}}{H}\;\oplus\dbn{\substack{H\in \Gamma\\|H\cap M|\geq p\\|H|\geq p^2}}{H}$$ is an $RG$-module, then   $$Z_1=\dire{\substack{H\in \Gamma'\\ |H|\geq p}}\overline{F'(H)_N}+\dbn{\substack{H\in \Gamma\\|H|\geq p^2}}{H}=\dbn{\substack{H\in \Gamma\\ H\leqslant M\\ |H|=p}}{H}\;\oplus\dbn{\substack{H\in \Gamma\\|H|\geq p^2}}{H}$$ is an $RG$-module. Since $\dbn{\substack{H\in \Gamma\\|H|\geq p^2}} {H}$ is an $RG$-direct summand of $\overline{U_N}$, we can find an $RG$-complement $W_1\leqslant Z_1,$ such that $$Z_1=W_1\oplus \dbn{\substack{H\in \Gamma\\|H|\geq p^2}}{H}.$$ As $$Z_1\cap \left(\dbn{\substack{H\in \Gamma\\ H\nleqslant M\\|H|<p^{k+1}}}{H}\;\oplus\dbn{\substack{H\in \Gamma \\|H\cap M|\geq p^{k} }}{H}\;\oplus\dbn{\substack{H\in \Gamma\\ H\leqslant M\\ p^2\leq|H|<p^K}}{H}\right)=\dbn{\substack{H\in \Gamma\\|H|\geq p^2}}{H},$$ we get
        $$\overline{U_N}=\dbn{\substack{H\in \Gamma\\ H\nleqslant M\\|H|<p^{k+1}}}{H}\;\oplus\dbn{\substack{H\in \Gamma \\|H\cap M|\geq p^{k} }}{H}\;\oplus\dbn{\substack{H\in \Gamma\\ H\leqslant M\\ p^2\leq|H|<p^K}}{H}\oplus W_1.$$
        Continuing in this way, we can find $RG$-submodules $W_1, W_2,...,W_{k-1}$ such that $$\overline{U_N}=\dbn{\substack{H\in \Gamma\\ H\nleqslant M\\|H|<p^{k+1}}}{H}\;\oplus\dire{\substack{H\in \Gamma' \\|H|\geq p^{k} }}\overline{F'(H)_N}\oplus W_{k-1}\oplus W_{k-2}\oplus ...\oplus W_1.$$ It follows that $\dbn{\substack{H\in \Gamma\\ H\nleqslant M\\|H|<p^{k+1}}}{H}\;\oplus\dbn{\substack{H\in \Gamma\\|H\cap M|\geq p^k}}{H}$ is an $RG$-direct summand of $\overline{U_N}$.
        \end{enumerate}

  \end{proof}
  
 We say that a finitely generated $RG$-module $V$ is an $RG$-\textit{block module} if $$V=\dire{i\in I}B_i,$$ where $B_i$ is an $A_iG$-lattice with $A_i=R/p^{i}R$  and $I\subseteq \{1,...,l\}$, for some $l\geq 0$. If every $B_i$ is an $A_iG$-permutation module, then we say that $V$ is an $RG$-\textit{permutation block module}.   
\begin{prop}\label{block-module}
    Let $U$ be an $RG$-lattice and $N\lhd G$ with $|N|=p^l$. Suppose that $U\res{N}=\dire{H\in \Gamma}F(H)$ and   $(U/U^N)_N$ is an $RG$-block module. If $F(N)=0$, then  $\dbn{|H|\geq p^k}{H}$ is an $RG$-direct summand of $\overline{U_N}$ for each  $k=1,...,l-1$. 
\end{prop}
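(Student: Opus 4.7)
The plan is to use the hypothesis $F(N)=0$ to transfer the problem from $\overline{U_N}$ to $\overline{(U/U^N)_N}$, where the block-module decomposition of $(U/U^N)_N$ will exhibit the desired $RG$-direct summand canonically.

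First, since $F(N)=0$, Lemma \ref{F(N) is invari}(2) yields a natural $RG$-isomorphism $\overline{U_N}\simeq\overline{(U/U^N)_N}$ induced by $\rho$. For each $H\in\Gamma$ with $|H|=p^j<p^l$, Proposition \ref{tor} identifies the projection $F(H)_N\twoheadrightarrow\rho(F(H)_N)$ as the natural surjection from a free $R$-module onto a free $A_{l-j}$-module, and because $p\in\pi R$ this becomes an $\F$-isomorphism after reducing mod $\pi$. Under the above identification, the subspace $\dire{|H|\geq p^k}\overline{F(H)_N}\subseteq\overline{U_N}$ corresponds precisely to $\dire{|H|\geq p^k}\overline{\rho(F(H)_N)}\subseteq\overline{(U/U^N)_N}$, so it suffices to exhibit an $RG$-complement for the latter.

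Write $V:=(U/U^N)_N=\dire{i\in I}B_i$ with $B_i$ an $A_iG$-lattice, $A_i=R/p^iR$, as furnished by the block-module hypothesis, and let $T:=\{v\in V\colon p^{l-k}v=0\}$, a canonical $RG$-submodule of $V$. My plan is to compute the image of $T$ under $V\twoheadrightarrow\overline{V}$ in two different ways. On one hand, using the block decomposition, $T=\dire{i\leq l-k}B_i\oplus\dire{i>l-k}p^{i-(l-k)}B_i$; the second summand lies in $\pi V$ because $p\in\pi R$, so the image of $T$ in $\overline{V}$ is the $RG$-submodule $\dire{i\leq l-k}\overline{B_i}$, which has the evident $RG$-complement $\dire{i>l-k}\overline{B_i}$ in $\overline{V}$. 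On the other hand, using the $R$-module decomposition $V=\dire{H\in\Gamma\setminus\{N\}}\rho(F(H)_N)$ together with Proposition \ref{tor} (each $\rho(F(H)_N)$ with $|H|=p^j$ being $A_{l-j}$-free), one sees that $\rho(F(H)_N)\subseteq T$ exactly when $j\geq k$, while for $j<k$ only $p^{k-j}\rho(F(H)_N)\subseteq\pi V$ contributes to $T$, so the image of $T$ in $\overline{V}$ equals $\dire{|H|\geq p^k}\overline{\rho(F(H)_N)}$.

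Equating these two computations yields $\dire{|H|\geq p^k}\overline{\rho(F(H)_N)}=\dire{i\leq l-k}\overline{B_i}$ as $RG$-submodules of $\overline{V}$, and the $RG$-complement $\dire{i>l-k}\overline{B_i}$ of this submodule in $\overline{V}$ pulls back via the isomorphism $\overline{U_N}\simeq\overline{V}$ to an $RG$-complement of $\dire{|H|\geq p^k}\overline{F(H)_N}$ in $\overline{U_N}$, as required. The main technical obstacle is the careful double bookkeeping of the two decompositions of $V$ and the verification that the ``residual'' contributions in each decomposition vanish after reducing mod $\pi$, which is an essential use of $p\in\pi R$; once this is tracked, the $RG$-structure of the complement is automatic from the block-module hypothesis.
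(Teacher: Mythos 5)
Your proof is correct and follows essentially the same route as the paper. The paper's argument also compares the two decompositions of $(U/U^N)_N$ by tracking the elements annihilated by $p^{l-k}$, establishing the two inclusions $\dire{|H|\geq p^k}\overline{\rho(F(H)_N)}\leqslant\dire{i\leq l-k}\overline{B_i}$ and its reverse by the same $p$-torsion reasoning you use; your introduction of the canonical submodule $T=\{v:p^{l-k}v=0\}$ simply packages those two inclusions symmetrically, but the computational content is identical.
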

\begin{proof}
    We can write $(U/U^N)_N=\displaystyle{\bigoplus_{i=1}^{|N|}B_{i}}$ where $B_{i}$ is an $(R/p^{i}R)[G/N]$-lattice, because, by  Proposition \ref{tor}, the decomposition of $U\res{N}$ implies that the indecomposable $R$-summands of $(U/U^N)_N$ are of the form $R/p^iR$ for some $i=1,2,...,l$. 
    Now, from the decomposition of $(U/U^N)_N$ as a direct sum of  $B_i's$, we get   $$\dire{\substack{H\in \Gamma\\|H|\geq p^{i}}}\rho(F(H)_N)\leqslant\bigoplus_{j\geq i} B_{l-j} + \pi(U/U^N)_N,$$ because $p^{l-i}$ acts as $0$ on $\dire{\substack{H\in \Gamma\\|H|\geq p^{i}}}\rho(F(H)_N)$. It follows that $$\dire{\substack{H\in\Gamma\\|H|\geq p^{i}}}\overline{\rho(F(H)_N)}\leqslant\bigoplus_{j\geq i} \overline{B_{l-j}} \leqslant \overline{(U/U^N)_N}.$$  Since  $\dire{\substack{H\in \Gamma\\|H|\geq p^{i}}}\rho(F(H)_N)$ is an $R$-direct summand of $(U/U^N)_N$, by similar arguments we have 
    $$\bigoplus_{j\geq i} \overline{B_{l-j}}\leqslant\dire{\substack{H\in\Gamma\\|H|\geq p^{i}}}\overline{\rho(F(H)_N)}.$$
    Therefore,  $$\dire{\substack{H\in\Gamma\\|H|\geq p^{i}}}\overline{\rho(F(H)_N)}=\bigoplus_{j\geq i} \overline{B_{l-j}}.$$ Finally, if $F(N)=0$ then the   Lemma \ref{F(N) is invari} provides us a natural isomorphism  $  \overline{(U/U^N)}\simeq \overline{U_N}$, thus $$\bigoplus_{j\geq i} \overline{B_{l-j}}=\dire{\substack{H\in\Gamma\\|H|\geq p^{i}}}\overline{\rho(F(H)_N)}=\dbn{\substack{H\in\Gamma\\|H|\geq p^i}}{H},$$ is an $RG$-direct summand of $\overline{U_N}$.

\end{proof}

\begin{prop}(A version of \cite[Proposition 3.5]{MACQUARRIE2020106925} )\label{exchange}
Let  $X$ be a  direct summand of a finitely generated $\F G$-module

$$
Y \;=\; \dire{i\in I} Y_i,$$

where each $Y_i$ is an indecomposable $\F G$-module.
Then there exists a subset $J \subseteq I$ such that

$$
Y \;=\; X \oplus\dire{i\in J} Y_i.$$

\end{prop}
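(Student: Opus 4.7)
The plan is to prove the exchange property by induction on the number $n$ of indecomposable Krull--Schmidt summands of $X$. Since $\F G$ is a finite-dimensional $\F$-algebra, every finitely generated $\F G$-module decomposes essentially uniquely into indecomposables with local endomorphism rings. Write $X = \bigoplus_{k=1}^{n} X_k$ accordingly. The case $n=0$ is immediate with $J = I$.

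For the inductive step I would first perform an exchange between the single indecomposable $X_1$ and one of the $Y_i$. Fix a complement $Z$ with $Y = X \oplus Z$, write $s \colon X_1 \hookrightarrow Y$ for the inclusion and $q \colon Y \twoheadrightarrow X_1$ for the projection along $\bigl(\bigoplus_{k\geq 2} X_k\bigr) \oplus Z$. Let $\iota_i, \pi_i$ denote the inclusions and projections attached to $Y = \bigoplus_{i \in I} Y_i$. I would expand $\mathrm{id}_{X_1} = qs = \sum_{i\in I}(q\iota_i)(\pi_i s)$ and invoke the locality of $\mathrm{End}_{\F G}(X_1)$ to pick an index $i_0$ for which $(q\iota_{i_0})(\pi_{i_0} s)$ is a unit. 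Indecomposability of $X_1$ and $Y_{i_0}$ then forces both $\pi_{i_0}s$ and $q\iota_{i_0}$ to be isomorphisms, and a short block-matrix computation shows that the map $X_1 \oplus V \to Y$ defined by $s$ on $X_1$ and the given inclusions on $V := \bigoplus_{i \neq i_0} Y_i$ is an isomorphism, yielding $Y = X_1 \oplus V$ as an internal direct sum.

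To promote this from $X_1$ to the full $X$, I would let $\rho \colon Y \to V$ denote the projection along $X_1$ and set $X'' := \rho(X)$. Since $X_1 \leq X$, for each $x \in X$ the element $x - \rho(x)$ lies in $X_1 \leq X$, whence $\rho(x) \in X$ and thus $X'' \leq X$; this gives $X = X_1 \oplus X''$, so $X'' \cong \bigoplus_{k \geq 2} X_k$ has exactly $n-1$ indecomposable summands. Using $Y = X \oplus Z$, a similar check produces $V = X'' \oplus \rho(Z)$, so $X''$ is an internal direct summand of $V$. Applying the inductive hypothesis to $X'' \leq V$ yields $J \subseteq I \setminus \{i_0\}$ with $V = X'' \oplus \bigoplus_{i \in J} Y_i$, and combining gives $Y = X_1 \oplus V = X \oplus \bigoplus_{i \in J} Y_i$, as required.

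The substantive step is the single-indecomposable exchange: it is essentially the Krull--Schmidt--Azumaya exchange lemma and depends crucially on the local endomorphism rings of $X_1$ and the $Y_i$. The remaining difficulty is purely bookkeeping, making sure the decompositions are internal at each stage so that $X$ itself (and not merely an isomorphic copy) appears in the final direct-sum decomposition of $Y$.
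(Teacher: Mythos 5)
Your proof is correct. Note first that the paper does not supply its own argument for Proposition~\ref{exchange}: it is stated as ``a version of'' \cite[Proposition 3.5]{MACQUARRIE2020106925} and used as a black box, so there is no internal proof to compare yours against. What you have written is the standard Krull--Schmidt--Azumaya exchange argument, and it is complete: since $\End_{\F G}(X_1)$ is local, the expansion $\mathrm{id}_{X_1}=\sum_{i\in I}(q\iota_i)(\pi_i s)$ forces some term $(q\iota_{i_0})(\pi_{i_0}s)$ to be a unit; indecomposability of $X_1$ and of $Y_{i_0}$ then upgrades the split mono $\pi_{i_0}s$ and the split epi $q\iota_{i_0}$ to isomorphisms; and the lower-triangular block matrix with invertible diagonal shows $Y=X_1\oplus\bigoplus_{i\neq i_0}Y_i$ as an internal direct sum. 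Your bookkeeping for the inductive descent is also sound: $X'':=\rho(X)\leqslant X$ because $x-\rho(x)$ is the $X_1$-component of $x$ and $X_1\leqslant X$, so $X=X_1\oplus X''$; and since $\ker\rho=X_1$ meets $X''\oplus Z$ trivially, $\rho$ is injective there and $V=X''\oplus\rho(Z)$, so $X''$ is an internal summand of $V$ and the inductive hypothesis applies. The only point worth stating explicitly is that $I$ is finite (which is automatic here, as $Y$ is finitely generated over the Artinian algebra $\F G$ and each $Y_i$ is nonzero), so that the sum over $I$ and the block-matrix manipulation are unproblematic.
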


\begin{prop}\label{sumanofU_M}
    Let $U$ be an $RG$-lattice and $N\lhd G$ with $|N|=p^l$. Suppose that $U\res{N}=\displaystyle{\bigoplus_{H\in \Gamma}F(H)}$ and that  $\dire{|H|\geq p^k}\overline{F(H)_N}$ is an  $RG$-direct summand of $\overline{U_N}$, for $k=1,...,l$. If $M$ is a maximal subgroup of $N$ with $M\lhd G$ and $\overline{U_M}$ is an $\F G$-permutation module, then $\dire{\substack{H\in \Gamma'\\ |H|\geq p^k}}\overline{F'(H)_M}$ is an $RG$-direct summand of $\overline{U_M}$, for $k=1,...,l-1$.
\end{prop}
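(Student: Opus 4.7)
The plan is to proceed by downward induction on $k$, from $k=l-1$ down to $k=1$. By Proposition \ref{block} applied to $U$ viewed with normal subgroup $M$, the submodule
$$V_k:=\dire{\substack{H\in \Gamma'\\ |H|\geq p^k}}\overline{F'(H)_M}\leqslant \overline{U_M}$$
is already $\F G$-invariant in $\overline{U_M}$, so at each step of the induction the content is to exhibit an $\F G$-complement inside the permutation module $\overline{U_M}$.

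The main tool will be the natural $\F G$-surjection $\psi\colon\overline{U_M}\longrightarrow\overline{U_N}$ induced by taking $N/M$-coinvariants. By Theorem \ref{M-block} combined with the standing direct-summand hypothesis on $\overline{U_N}$, the image
$$\psi(V_k)=\dire{\substack{H\in\Gamma'\\|H|\geq p^k}}\overline{F'(H)_N}$$
is known to be an $\F G$-direct summand of $\overline{U_N}$. For the inductive step I would combine this with the inductive hypothesis that $V_{k+1}$ is an $\F G$-direct summand of $\overline{U_M}$: the quotient $\overline{U_M}/V_{k+1}$ is then itself an $\F G$-permutation module (direct summands of finitely generated permutation modules over a $p$-group are again permutation, since the indecomposable permutation pieces $\F[G/P]$ have local endomorphism algebras and Krull--Schmidt applies). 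Inside this quotient I would isolate the image of $V_k$ by pulling back along the map induced by $\psi$ an appropriate direct-summand decomposition of $\overline{U_N}/\psi(V_{k+1})$ provided by Theorem \ref{M-block}, and then invoking Proposition \ref{exchange} against the Krull--Schmidt decomposition of $\overline{U_M}/V_{k+1}$ to extract the required $\F G$-complement.

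I expect the main obstacle to be the base case $k=l-1$, where $V_{l-1}=\overline{F'(M)_M}$. Since $F'(M)=F(M)\oplus F(N)$, the submodule $V_{l-1}$ mixes the image of the regular $R[N/M]$-piece $F(M)_M$ with that of the trivial piece $F(N)_M$, the latter being part of the maximal trivial $N/M$-summand $\overline{F(N/M)}$ of $\overline{U_M}$. Separating these two contributions $\F G$-equivariantly (and not merely $\F N$-equivariantly) inside $\overline{U_M}$ will require a careful parallel of the intersection-and-complement construction used in the proof of Proposition \ref{sumanofU_N}: apply Proposition \ref{block} to the pair $(M,U)$ to produce the relevant $\F G$-invariant pieces, and then use Proposition \ref{exchange} together with the permutation hypothesis on $\overline{U_M}$ to conclude.
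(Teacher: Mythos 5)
Your plan correctly identifies most of the relevant tools (Proposition \ref{block} for $\F G$-invariance, Theorem \ref{M-block} for the picture in $\overline{U_N}$, Proposition \ref{exchange}, and the Krull--Schmidt structure of $\overline{U_M}$), and the reduction $F'(M)=F(M)\oplus F(N)$ for the base case is right. But there is a genuine gap at the heart of both the inductive step and the base case: you never explain how to pass from an $\F G$-direct-summand decomposition of $\overline{U_N}$ to one of $\overline{U_M}$. The phrase ``pulling back along the map induced by $\psi$ an appropriate direct-summand decomposition'' glosses over exactly the hard point: $\psi$ is a surjection with nontrivial kernel $I_N\overline{U_M}$, and the preimage (or a lift) of a direct summand of $\overline{U_N}$ is generally not a direct summand of $\overline{U_M}$. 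Proposition \ref{exchange} cannot repair this, because it only rearranges a decomposition once you already \emph{know} a given submodule is a summand --- it does not turn an arbitrary $\F G$-submodule into a summand. So ``invoking Proposition \ref{exchange} against the Krull--Schmidt decomposition of $\overline{U_M}/V_{k+1}$ to extract the required complement'' assumes what you are trying to prove.

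The paper's proof confronts this head-on: it decomposes $\overline{U_M}=\bigoplus_{i\in I}P_i$ into indecomposable permutation summands, uses exchange on $\overline{U_N}$ together with Proposition \ref{sumanofU_N} to pick out index sets $I_2,I_3\subseteq I$ so that $\dire{i\in I_3}(P_i)_N\oplus\dire{i\in I_2}(P_i)_N\oplus\dire{\substack{H\in\Gamma'\\|H|\geq p^k}}\overline{F'(H)_N}=\overline{U_N}$, then applies Nakayama's lemma to conclude that $\dire{i\in I_3}P_i\oplus\dire{i\in I_2}P_i$ together with $\dire{\substack{H\in\Gamma'\\|H|\geq p^k}}\overline{F'(H)_M}$ \emph{generates} $\overline{U_M}$; the delicate part is then showing this sum is \emph{direct}, which is done with an explicit argument involving the norm map $\mathcal{N}_{N/M}$ and a descending-power computation with a generator of $N/M$. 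Nothing in your sketch supplies a substitute for that directness argument, and without it the claim remains unproven. (Your downward-induction framing is fine as bookkeeping, but it does not remove this obstacle: the inductive step still requires showing that a certain $\F G$-submodule of the permutation module $\overline{U_M}/V_{k+1}$ is a direct summand, which is the same problem one level down.)
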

\begin{proof}
    Write $\overline{U_M}=\dire{i\in I}P_i$ where $P_i$ is an indecomposable $\F G$-permutation module. Being $(P_i)_N$ an indecomposable $\F G$-module and $\dbn{\substack{H\in \Gamma\\ |H|\geq p^k}}{H}$ is an $RG$-direct summand of $\overline{U_N}$,  
     the  Proposition \ref{exchange}  give us  $J\subseteq I$ such that $$\overline{U_N}=\dire{j\in J}(P_j)_N\oplus \dire{\substack{H\in \Gamma\\ |H|\geq p^k}}\overline{F(H)_N},$$ because $\overline{U_M}=\overline{U}_M$ and $\overline{U}_N=\overline{U_N}$.  Since $\overline{U_M}$ is an $\F G$-permutation module, then $\overline{U_M}\res{N/M}$ has an maximal trivial summand $X$ such that $X$ is an $\F G$-module. By Lemma \ref{F(N) is invari}, the image of  $X$ and   $F(N/M)$ coincide  in $\overline{U_N}$. So there exists $ I_1\subseteq I$ such that  $\dire{i\in I_1}P_i$ is the maximal $RN$-trivial summand of $\overline{U_M}$ and $$\overline{F(N/M)_N}=\dire{i\in I_1}(P_i)_N.$$ So, by  Proposition \ref{sumanofU_N}, there exists $I_2\subseteq I_1$ such that  $$\overline{F(N/M)_N}=\dire{i\in I_2}(P_i)_N\oplus\left(\overline{F(N/M)_N}\cap\left(\dire{\substack{H\in \Gamma'\\ |H|\geq p^k}}\overline{F'(H)_N}\right)\right).$$ Then by   Proposition \ref{sumanofU_N},  the module  $\dire{i\in I_2}(P_I)_N\;\oplus\dire{\substack{H\in \Gamma' \\ |H|\geq p^k}}\overline{F'(H)_N}$ is an $RG$-direct summand of $\overline{U_N}$. So there exists $I_3\subseteq I$ such that $$\overline{U_N}=\dire{i\in I_3}(P_i)_N\;\oplus\dire{i\in I_2}(P_i)_N\;\oplus\dire{\substack{H\in \Gamma'\\ |H|\geq p^k}}\overline{F'(H)_N}.$$ Consequently, by Nakayama's Lemma, we can write 
    $$\overline{U_M}=\dire{i\in I_3}P_i \;\oplus\dire{i\in I_2}P_i+\dire{\substack{H\in \Gamma'\\ |H|\geq p^k}}\overline{F'(H)_M}.$$ We will show that this sum is  direct.
    By Proposition \ref{sumanofU_N}, the $RG$-module  $$Z:=\overline{F(N/M)_N}\cap \left(\dire{\substack{H\in \Gamma'\\ |H|\geq p^k}}\overline{F'(H)_N}\right)$$ is an $RG$-direct summand of $\dire{\substack{H\in \Gamma'\\ |H|\geq p^k}}\overline{F'(H)_N}$, so there exists $W\leqslant \dire{\substack{H\in \Gamma'\\ |H|\geq p^k}}\overline{F'(H)_M}$ such that 
    $$\overline{U_N}=\dire{i\in I_3}\overline{(P_i)_N}\;\oplus \dire{i\in I_2}\overline{(P_i)_N}\oplus Z\oplus W.$$
     Let $\widehat{N/M}=\sum_{g\in N/M}g$ and define $\mathcal{N}_{N/M}:U_N\to U_M$ by 
    $\mathcal{N}_{N/M}(u+I_NU_M)=\widehat{N/M}u$ for $u\in U_M$. This map induces an $RG$-homomorphism $\overline{\mathcal{N}}_{N/M}:\overline{U_N}\to \overline{U_M}$ and by above decomposition of $\overline{U_N}$, we have $$\widehat{N/M}(\overline{U_M})=\overline{\mathcal{N}_M}(\overline{U_N})=\overline{\mathcal{N}_M}(\dire{i\in I_3}\overline{(P_i)_N})\oplus \overline{\mathcal{N}_M}(W),$$ because $$\ker(\overline{\mathcal{N}_M})=\overline{F(N/M)}=\dire{i\in I_2}\overline{(P_i)_N}\oplus Z.$$
    This shows that $$\widehat{N/M}(\overline{U_M})=\widehat{N/M}\left(\dire{i\in I_3}P_i\right)\oplus \widehat{N/M}\left(\dire{\substack{H\in \Gamma'\\ |H|\geq p^k}}\overline{F'(H)_M}\right).$$
    From the decomposition of $$\overline{U_N}=\dire{i\in I_3}(P_i)_N\;\oplus\dire{i\in I_2}(P_i)_N\;\oplus\dire{\substack{H\in \Gamma'\\ |H|\geq p^k}}\overline{F'(H)_N},$$
     we get      $$\left(\dire{i\in I_3}P_i\;\oplus\dire{i\in I_2}P_i\right)\cap\left(\dire{\substack{H\in \Gamma'\\ |H|\geq p^k}}\overline{F'(H)_M}\right)\leqslant I_N\overline{U_M},$$ but $\dire{i\in I_3}P_i\;\oplus\dire{i\in I_2}P_i$ and  $\dire{\substack{H\in \Gamma'\\ |H|\geq p^k}}\overline{F'(H)_M}$ both being $RN$-direct summands of $\overline{U_M}$, we have $$\left(\dire{i\in I_3}P_i\;\oplus\dire{i\in I_2}P_i\right)\cap\left(\dire{\substack{H\in \Gamma'\\ |H|\geq p^k}}\overline{F'(H)_M}\right)\leqslant I_N\left(\dire{i\in I_3}P_i\;\oplus\dire{i\in I_2}P_i\right)\cap I_N\left(\dire{\substack{H\in \Gamma'\\ |H|\geq p^k}}\overline{F'(H)_M}\right)= $$
    $$=I_N\left(\dire{i\in I_3}P_i\right)\cap I_N\left(\dire{\substack{H\in \Gamma'\\ |H|\geq p^k}}\overline{F'(H)_M}\right).$$
    If $u$ is an element in the above intersection and  if $n$ generates $N/M$, there exists $r\geq 0$ such that $(n-1)^ru=0$, hence $$(n-1)^{r-1}u\in \left(\dire{i\in I_3}I_NP_i\right)^N\cap\left(\dire{\substack{H\in \Gamma'\\|H|\geq p^k}}I_N\overline{F'(H)_M}\right)^N.$$ But $\dire{i\in I_3}P_i^N=\widehat{N/M}\left(\dire{i\in I_3}P_i\right)$ and $\dire{\substack{H\in \Gamma'\\|H|\geq p^k}}\left(I_N\overline{F'(H)_M}\right)^N\leqslant \widehat{N/M}\left(\dire{\substack{H\in \Gamma'\\ |H|\geq p^k}}\overline{F'(H)_M}\right) $, since  $$\widehat{N/M}\overline{U_M}=\widehat{N/M}\left(\dire{i\in I_3}P_i\right)\oplus \widehat{N/M}\left(\dire{\substack{H\in \Gamma'\\ |H|\geq p^k}}\overline{F'(H)_M}\right),$$ so $(n-1)^{r-1}u=0$. Continuing in this way, we get $u=0$.  
    It follows that $$\overline{U_M}=\dire{i\in I_3}P_i\;\oplus \dire{i\in I_2}P_i\;\oplus\dire{\substack{H\in \Gamma'\\ |H|\geq p^k}}\overline{F'(H)_M}.$$
\end{proof}
Let $U$ and $V$ be $RG$-modules. An $RG$-homomorphism  $f:U\longrightarrow V$ is called  supersurjective  if the induced map $f^H:U^H\longrightarrow V^H$ is surjective for every subgroup $H$ of $G$.
    \begin{prop}(\cite[Lemma 6.6]{MACQUARRIE2020106925})\label{super}
        Let $A$ denote either $R$ or $\F$. Let   $U$ and $V$ be $AG$-lattices and suppose that $V$ is an $AG$-permutation module. If $f:U\longrightarrow V$ is a supersurjective $AG$-module homomorphism, then $f$ splits. 
      \end{prop}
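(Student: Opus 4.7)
The plan is to reduce to the case where $V$ is a single transitive permutation module $A[G/H]$ and then use supersurjectivity to lift its canonical generator. Since $V$ is an $AG$-permutation module, write $V=\bigoplus_{i\in I}A[G/H_i]$ for suitable subgroups $H_i\leqslant G$. Because $AG$-linear sections of a direct sum can be assembled coordinatewise, it suffices to produce, for each $i$, an $AG$-homomorphism $s_i:A[G/H_i]\to U$ with $f\circ s_i=\mathrm{id}$; then $s=\bigoplus_i s_i:V\to U$ is the desired section of $f$.

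For a fixed index $i$, the distinguished coset $e_i:=H_i\in A[G/H_i]\subseteq V$ is fixed by $H_i$, so $e_i\in V^{H_i}$. The hypothesis that $f$ is supersurjective says in particular that the induced map $f^{H_i}:U^{H_i}\to V^{H_i}$ is surjective, so I may choose $u_i\in U^{H_i}$ with $f(u_i)=e_i$. The universal property of the cyclic permutation module (equivalently, Frobenius reciprocity $\Hom_{AG}(A[G/H_i],U)\cong U^{H_i}$) then guarantees that the assignment $gH_i\mapsto gu_i$ extends to a well-defined $AG$-homomorphism $s_i:A[G/H_i]\to U$; well-definedness is exactly the fact that $hu_i=u_i$ for every $h\in H_i$. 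A direct check gives $(f\circ s_i)(gH_i)=gf(u_i)=ge_i=gH_i$, so $f\circ s_i$ is the identity on $A[G/H_i]$, as required.

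There is no genuine obstacle in this argument: the content is that transitive permutation modules represent the fixed-point functor, so the claim reduces to the tautology that a surjection between fixed-point $A$-modules always admits an $A$-linear pre-image of a chosen element. The only information about $f$ used is surjectivity of $f^{H}$ for those stabilizers $H$ that actually appear in the decomposition of $V$; the full strength of supersurjectivity is therefore more than what the splitting requires.
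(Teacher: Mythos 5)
The paper does not prove this proposition; it is quoted directly from \cite[Lemma 6.6]{MACQUARRIE2020106925}, so there is no in-paper argument to compare your proof against. Your argument is correct and is the natural one: it rests on the fact that the transitive permutation module $A[G/H]$ represents the $H$-fixed-point functor, i.e.\ $\Hom_{AG}(A[G/H],U)\cong U^H$ via evaluation at the coset $H$. Supersurjectivity supplies an $H_i$-fixed preimage $u_i$ of the generating coset $e_i$ of each summand $A[G/H_i]$ of $V$, the adjunction converts $u_i$ into a homomorphism $s_i\colon A[G/H_i]\to U$ with $f\circ s_i$ equal to the inclusion $A[G/H_i]\hookrightarrow V$ (note that ``identity on $A[G/H_i]$'' is a slight abuse, since the composite lands in $V$, but your intent is clear), and the $s_i$ assemble into a full section of $f$. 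Your closing observation is also sound: the argument only uses surjectivity of $f^{H_i}$ for the stabilizers $H_i$ appearing in a chosen permutation basis of $V$ (really a union of conjugacy classes of subgroups, since $A[G/H]$ determines $H$ only up to $G$-conjugacy), so the supersurjectivity hypothesis is stronger than the splitting strictly requires.
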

 
\section{Proof of main Theorem}\label{s3}
    \begin{proof}[Proof of Theorem \ref{main}]
        We argue by induction on $\text{rank}(U)$. The result is obvious if either $\text{rank}(U)=1$ or $|G|=p$. So suppose $|G|>p$ and $\text{rank}(U)>1$. We split the proof into two cases: 1 when $F(N)=0$ and 2 when $F(N)\neq 0$. 
        \begin{enumerate}
        \item We first consider the case where $F(N)=0$.  The result is true if $|N|=p$ by Theorem \ref{Theo.MSZ}, because in this case $U\res{N}$ is $RN$-free, thus  $U^N\simeq U_N$. So suppose $|N|>p$ and let $M$ be a maximal subgroup of $N$ with $M\lhd G$.  Since  $$F(N/M)=\dire{\substack{H\in \Gamma\\H\nleqslant M}}F(H)_M$$  is the $RN$-maximal trivial summand of $(U_M)\res{N}$ relative to the decomposition $$(U_M)\res{N}=\dire{\substack{H\in \Gamma}} F(H)_M,$$ then $\overline{F(N/M)_N}$ is an $RG$-direct summand of $\overline{U_N}$ by  Propositions \ref{block-module} and  \ref{sumanofU_N} part \ref{1sum}. So, by Lemma \ref{split-over-N} there exists an exact sequence of $RG$-modules $$\xymatrix{0\ar[r] & K'\ar[r] & U\ar[r]& X' \ar[r]& 0}$$ that splits over $N$, where $X'$ is an $RG$-permutation module such that $X\res{N}\simeq F(N/M)$ and $K'_N$ is an $RG$-permutation module. 
         Since $K'\res{N/M}$ is $R[N/M]$-free and $K'_N$ is an $RG$-permutation module, by  Theorem \ref{Theo.MSZ}, we have $K'$ an  $R[G/M]$-permutation module. So by Lemma \ref{permuExt=0}, the above exact sequence splits and we get $U\simeq K'\oplus X'$ an $RG$-permutation module.
          Now, consider the decomposition $$U\res{M}=\dire{H\in \Gamma}F(H)\res{M}=\dire{H\in  \Gamma'}F'(H).$$
        If $F'(M)\neq 0$, then by Lemma \ref{F(N) is invari} the $RN$-module  $\overline{F'(M)}$  is an $RG$-submodule of  $\overline{U_M}$.  By  Propositions \ref{block-module} and  \ref{sumanofU_M}, we have $\dire{\substack{H\in \Gamma'\\ |H|\geq p^i}}\overline{F'(H)_M}$ is an $RG$-direct summand of $\overline{U_M}$ for $i=1,...,l-1$. Therefore, $\overline{F'(M)}$ is an $RG$-direct summand of $\overline{U_M}$. Since $U_M$ is an $R[G/M]$-permutation module,  by Lemma \ref{Ext=0} and Lemma \ref{permuExt=0}, we get an $RG$-decomposition $$U_M=Y\oplus X$$ such that $\overline{X}= \overline{F'(M)}$. By Lemma \ref{split-over-N} this decomposition gives us an exact sequence $$\xymatrix{0\ar[r] & K\ar[r]& U\ar[r]& X\ar[r]& 0}$$ that splits over $M$, furthermore,  $K\res{M}$ does not have trivial summands and $K_M$ is an $RG$-permuta\-tion module. So  Proposition \ref{G/N-coflasque} ensures that $K^M$ is $G/M$-coflasque. Since $X$ is an $RG$-permutation module,  the above exact sequence induces the exact sequence  $$\xymatrix{0\ar[r] & K^M\ar[r]& U^M\ar[r]& X\ar[r]& 0}$$ that splits over $G$, because $K^M$ being a $G/M$-coflasque module implies that the map $U^M\to X$ is supersurjective and so split by Proposition \ref{super}. Therefore, the exact sequence $$\xymatrix{0\ar[r] & K\ar[r]& U\ar[r]& X\ar[r]& 0}$$ splits over $G$ because $M$ acts trivially on $X$. It follows that $U\simeq K\oplus X$ and $(U/U^N)_N\simeq(K/K^N)_N\oplus (X/X^N)_N$. Since $(U/U^N)_N$ is an $R[G/N]$-permutation block module, then $(K/K^N)_N$ is an $RG$-permutation block module. Since $K^N$ is $G/N$-coflasque and $K_N$ is a permutation module, by the induction hypothesis on $\text{rank}(U)$ we have $K$ an $RG$-permu\-tation module. Thus, $U$ is an $RG$-permu\-tation module.
        Remains to consider the case  when 
        $F'(M)=0$. In this case, we can find a maximal subgroup $M_1$ of $M$ that is normal in $G$, and as we show that the modules $\dire{\substack{H\in \Gamma'\\|H|\geq p^k}}\overline{F'(H)_M}$ are $RG$-direct summands of $\overline{U_M}$ for $k=1,...,l-1$, we can 
        repeat the same arguments as in the case $F(N)=0$ to show that $U$ is an $RG$-permutation module if $U\res{M_1}$ has trivial summands. If $M_1$ has no trivial summands, we can continue this process with a maximal subgroup $M_2$ of $M_1$ that is normal in $G$. If $U\res{M_2}$ has trivial summands, we can use the same arguments as above to show that $U$ is an $RG$-permutation module. If not, we can continue this process.  Of course, if during this process we find  a normal subgroup $M_i$ with $|M_i|=p$, then if $U\res{M_i}$ does not have a trivial summand, we can use  Theorem \ref{Theo.MSZ} to shown that $U$ is an $RG$-permutation module, because during this process we have shown that $U_{M_i}$ is an $R[G/M_i]$-permutation module.
        \item
        For the case $F(N)\neq 0$, consider the exact sequence $$\xymatrix{0\ar[r] & \overline{F(N)}\ar[r]& \overline{U_N}\ar[r]& \overline{(U/U^N)_N}\ar[r]& 0}$$
        induced by $\rho:U_N\to (U/U^N)_N$.
        Since $U^N$ is $G/N$-coflasque, $U_N$ is an $RG$-permutation module  and $(U/U^N)_N$ is an $RG$-permutation block module, in the following commutative diagram 
        $$\xymatrix{(U_N)^H\ar[r]\ar[d]& ((U/U^N)_N)^H\ar[d]\\ \overline{U_N}^H\ar[r]& \overline{(U/U^N)_N}^H}$$ the top map is surjective and the vertical maps are surjective. So, the bottom map is surjective. It follows that the map $$\overline{U_N}\to \overline{(U/U^N)_N}$$ is surpersurjective. As consequence, the exact sequence $$\xymatrix{0\ar[r] & \overline{F(N)}\ar[r]& \overline{U_N}\ar[r]& \overline{(U/U^N)_N}\ar[r]& 0}$$ splits over $G$, because $\overline{(U/U^N)_N}$ is an $\F G$-permutation module. Thus, by Lemma \ref{split-over-N} we get an exact sequence $$\xymatrix{0\ar[r] & K\ar[r]& U\ar[r]& X\ar[r]& 0}$$ that splits over $N$. Its follows that $(U/U^N)_N=(K/K^N)_N$ and $U_N=K_N\oplus X$. In particular, by  Proposition \ref{G/N-coflasque}, $K^N$ is $G/N$-coflasque, so by the  induction hypothesis on $\text{rank}(U)$ we get $K$ an $RG$-permutation module, so that $U$ is an $RG$-permutation module by Lemma \ref{permuExt=0}. 
    \end{enumerate}
        
        %$$\dire{\substack{H\in \Gamma\\ |H|\geq p^k}}{\overline{F(H)_N}}=\overline{F(H\in \Gamma; |H|\geq p^k)_N}$$
         
    \end{proof}
    \section{Counterexamples}\label{s4}
    Indeed, the conditions of Theorem \ref{main} cannot be naively weakened in general. In  joint work with MacQuarrie,  we show that the condition $(U/U^N)_N$ be an $RG$-permutation block module cannot be removed in general \cite[Examples 1 and 2]{J.M}. Examples showing that the  conditions $U^N$ being $G/N$-coflasque and $U_N$ being an $RG$-permutation module cannot be removed in general are given in \cite[Examples 1 and 2]{John}. When $|N|=p$ and $p$ is prime in $R$, the condition that $U\res{N}$ be an $RN$-permutation module is equivalent to $U_N$ being an $R$-lattice \cite[Theorem A]{TORRECILLAS2013533}. But in general, conditions \ref{it2} and \ref{it3} do not ensure that $U\res{N}$ is an $RN$-permutation module as the following example shows. It is obtained  using a tool invented by M.C.R. Butler  in \cite{butler2}, and further developed to construct examples of this type  in \cite{J.M}.
     \begin{example}\label{exe}
      Let us consider $R=\mathbb{Z}_2$ and $G=\langle n\rangle\times \langle c\rangle\times \langle m\rangle$, the elementary abelian $2$-group of order $8$.
    Let $U$ be the $\Z_2G$-lattice defined by the following action of $G$: 
    $$n=\begin{psmallmatrix}
            1 & 0 & 0 & 0 & 0 & 0 & 0 & 0 & 0 \\
            0 & 1 & 0 & 0 & 0 & 0 & 0 & 0 & 0 \\
            0 & 0 & 1 & 0 & 0 & 0 & 0 & 0 & 0 \\
            0 & 0 & 0 & 1 & 0 & 0 & 0 & 0 & 0 \\
            0 & 0 & 0 & 0 & 1 & 0 & 0 & 0 & 0 \\
            -1 & -1 & -1 & 0 & 0 & -1 & 0 & 0 & 0 \\
           -1 & 0 & 0 & 0 & 0 & 0 & -1 & 0 & 0 \\
           0 & -1 & 0 & 0 & 0 & 0 & 0 & -1 & 0 \\
           0 & 1 & 0 & -1 & -1 & 0 & 0 & 0 & -1
\end{psmallmatrix}, \;c=\begin{psmallmatrix}
    1 & 0 & 0 & 0 & 0 & 0 & 0 & 0 & 0 \\
0 & 1 & 0 & 0 & 0 & 0 & 0 & 0 & 0 \\
0 & 0 & 1 & 0 & 0 & 0 & 0 & 0 & 0 \\
-1 & -1 & 0 & -1 & 0 & 0 & 0 & 0 & 0 \\
-1 & 0 & -1 & 0 & -1 & 0 & 0 & 0 & 0 \\
0 & 0 & 0 & 0 & 0 & 1 & 0 & 0 & 0 \\
0 & 0 & 0 & 0 & 0 & 0 & 1 & 0 & 0 \\
0 & -1 & 0 & 0 & 0 & 0 & 0 & -1 & 0 \\
0 & 1 & 0 & 0 & 0 & -1 & -1 & 0 & -1
\end{psmallmatrix},$$
$$m=\begin{psmallmatrix}
    1 & 0 & 0 & 0 & 0 & 0 & 0 & 0 & 0 \\
0 & 1 & 0 & 0 & 0 & 0 & 0 & 0 & 0 \\
0 & 0 & 1 & 0 & 0 & 0 & 0 & 0 & 0 \\
0 & 0 & 0 & 1 & 0 & 0 & 0 & 0 & 0 \\
-1 & 0 & -1 & 0 & -1 & 0 & 0 & 0 & 0 \\
0 & 0 & 0 & 0 & 0 & 1 & 0 & 0 & 0 \\
-1 & 0 & 0 & 0 & 0 & 0 & -1 & 0 & 0 \\
0 & 0 & 0 & 0 & 0 & 0 & 0 & 1 & 0 \\
0 & 0 & 0 & -1 & 0 & -1 & 0 & -1 & -1
\end{psmallmatrix}.$$
Let $N=\langle n,c\rangle$. We have  $U^N$ a $\Z_2G$-permutation module and $U_N$  a $\Z_2G$-permutation module. The reader may check this by confirming that $(U^N)_{G/N}$, $U_N$ and $(U_N)_{G/N}$ are free $\Z_2$-modules and use \cite[Lemma 8]{John}. The module $(U/U^N)_N$ has trivial action, as may be seen from the fact that $I_{\langle m\rangle}U\leqslant I_NU+U^N$. However, $(U^{\langle n\rangle})_{N/\langle n \rangle}$  is not a free $\Z_2$-module, so $U^{\langle n\rangle}$ is not a $\Z_2[N/\langle n\rangle]$-permutation module by \cite[Lemma 8]{John}. Therefore, $U\res{N}$ is not a $\Z_2N$-permutation module.

    \end{example} 
    \printbibliography
  %\bibliographystyle{plain}
   %\bibliography{ref}
\end{document}